\theoremstyle{plain}
\newtheorem{thm}{Theorem}[section]
\newtheorem{prop}[thm]{Proposition}
\newtheorem{lem}[thm]{Lemma}
\newcommand{\C}{\mathbb C}
\newcommand{\Q}{\mathbb Q}
\newcommand{\Z}{\mathbb Z}
\newcommand{\N}{\mathbb N}
\newcommand{\PP}{\mathbb P}
\DeclareMathOperator{\Coh}{Coh}
\DeclareMathOperator{\add}{add}
\DeclareMathOperator{\mmod}{mod}
\DeclareMathOperator{\rep}{rep}
\DeclareMathOperator{\GL}{GL}
\DeclareMathOperator{\tr}{tr}
\DeclareMathOperator{\Hom}{Hom}
\DeclareMathOperator{\Ext}{Ext}
\DeclareMathOperator{\Ker}{Ker}
\DeclareMathOperator{\Ima}{Im}
\DeclareMathOperator{\rank}{rank}
\DeclareMathOperator{\Res}{Res}
\DeclareMathOperator{\BunConn}{BunConn}
\DeclareMathOperator{\CohConn}{CohConn}
\DeclareMathOperator{\Bun}{Bun}
\DeclareMathOperator{\dimv}{\underline\dim}
\newcommand{\OO}{\mathcal O}
\newcommand{\F}{\mathcal F}
\newcommand{\E}{\mathcal E}
\newcommand{\G}{\mathcal G}
\newcommand{\CC}{\mathcal C}
\newcommand{\XX}{\mathbb X}
\newcommand{\ox}{\vec x}
\newcommand{\oc}{\vec c}
\newcommand{\oom}{\vec{\omega}}
\newcommand{\bw}{\mathbf w}
\newcommand{\I}{\textit{\textbf i}\mkern2mu}
\begin{document}
\title{The Deligne--Simpson Problem}
\author{William Crawley-Boevey}
\author{Andrew Hubery}
\address{Fakult\"at f\"ur Mathematik, Universit\"at Bielefeld, 33501 Bielefeld, Germany}
\email{wcrawley@math.uni-bielefeld.de}
\email{hubery@math.uni-bielefeld.de}

\subjclass[2020]{Primary 15A24; Secondary 14H60, 16G20, 34M50}





\keywords{Deligne--Simpson Problem, weighted projective line, logarithmic connection, preprojective algebra}

\thanks{The authors are supported by the Deutsche Forschungsgemeinschaft (DFG, German Research Foundation) – Project-ID 491392403 – TRR 358, and previously were supported by the Alexander von Humboldt Foundation in the framework of an Alexander von Humboldt Professorship endowed by the German Federal Ministry of Education and Research.}

\begin{abstract}
Given $k$ similarity classes of invertible matrices, the Deligne--Simpson problem asks to determine whether or not one can find matrices in these classes whose product is the identity and with no common invariant subspace. The first author conjectured an answer in terms of an associated root system, and proved one implication in joint work with Shaw. In this paper we prove the other implication, thus confirming the conjecture.
\end{abstract}
\maketitle

\section{Introduction}
Given conjugacy classes $C_1,\dots,C_k$ in $\GL_n(\C)$, the Deligne--Simpson problem asks to determine whether or not there is a solution to the equation
\[ A_1 A_2 \cdots A_k = 1 \qquad (A_i \in C_i) \]
which is irreducible in the sense that the $A_i$ have no non-trivial common invariant subspace. This is motivated by the problem of classifying systems of linear ordinary differential equations in the complex domain in terms of their local monodromies; see \cite{Kostovsurvey} for more background and motivation. In~\cite{CBipb} the first author has given a conjectural solution to this problem, and one implication has been proven by Crawley-Boevey and Shaw~\cite{CBSh}. Here we prove the other implication, thus confirming the conjecture.

In order to fix the conjugacy classes, we observe that they are determined by a \emph{weight sequence} $\bw=(w_1,\dots,w_k)$ of positive integers, a collection of non-zero complex numbers $\xi = (\xi_{ip})$ ($1\le i\le k$, $1\le p\le w_i$), and integer sequences $n_{ip}$ ($1\leq i\leq k$, $0\leq p\leq w_i$) with 
\[ n = n_{i0} \geq n_{i1} \geq \cdots \geq n_{iw_i-1} \geq n_{iw_i} = 0. \]
The correspondence is such that $A_i\in\ C_i$ if and only if 
\[ \rank(A_i-\xi_{ip})\cdots(A_i-\xi_{i1}) = n_{ip} \quad (0\leq p\leq w_i). \]
For example $w_i$ can be the degree of the minimal polynomial of $A_i$, and $\xi_{i1},\dots,\xi_{i,w_i}$ its roots, in some order.

This data can best be understood in terms of a suitable root system. Associated to a quiver $Q$ with vertex set $I$ there is root system (consisting of real and imaginary roots, 
positive or negative) contained in the root lattice
\[
\Gamma_Q = \bigoplus_{v\in I} \Z \alpha_v = \{ \sum_{v\in I} n_v \alpha_v : n_v\in \Z \}.
\]
See for example \cite{Kacm}. The Euler form $\langle-,-\rangle_Q$ on $\Gamma_Q$ is given by
\[
\langle \sum_{v\in I} n_v \alpha_v, \sum_{v\in I} n'_v \alpha_v \rangle_Q = \sum_{v\in I} n_v n'_v - \sum_{a\in Q} n_{t(a)} n'_{h(a)},
\]
where $h(a)$ and $t(a)$ are the head and tail vertices of an arrow $a$ in $Q$. Its symmetrisation is $(\alpha,\beta)_Q=\langle\alpha,\beta\rangle_Q+\langle\beta,\alpha\rangle_Q$ and we define $p_Q(\alpha)=1-\langle\alpha,\alpha\rangle$.

Given a weight sequence $\bw$, let $Q_\bw$ be the star-shaped quiver
\setlength{\unitlength}{1.5pt}
\[
\begin{picture}(110,80)
\put(10,40){\circle*{2.5}} \put(30,10){\circle*{2.5}}
\put(30,50){\circle*{2.5}} \put(30,70){\circle*{2.5}}
\put(50,10){\circle*{2.5}} \put(50,50){\circle*{2.5}}
\put(50,70){\circle*{2.5}} \put(100,10){\circle*{2.5}}
\put(100,50){\circle*{2.5}} \put(100,70){\circle*{2.5}}
\put(28,67){\vector(-2,-3){16}}
\put(27,48.5){\vector(-2,-1){14}}
\put(28,13){\vector(-2,3){16}}
\put(47,10){\vector(-1,0){14}}
\put(47,50){\vector(-1,0){14}}
\put(47,70){\vector(-1,0){14}}
\put(67,10){\vector(-1,0){14}}
\put(67,50){\vector(-1,0){14}}
\put(67,70){\vector(-1,0){14}}
\put(97,10){\vector(-1,0){14}}
\put(97,50){\vector(-1,0){14}}
\put(97,70){\vector(-1,0){14}}
\put(70,10){\circle*{1}}
\put(75,10){\circle*{1}} \put(80,10){\circle*{1}}
\put(70,50){\circle*{1}} \put(75,50){\circle*{1}}
\put(80,50){\circle*{1}} \put(70,70){\circle*{1}}
\put(75,70){\circle*{1}} \put(80,70){\circle*{1}}
\put(30,25){\circle*{1}} \put(30,30){\circle*{1}}
\put(30,35){\circle*{1}} \put(50,25){\circle*{1}}
\put(50,30){\circle*{1}} \put(50,35){\circle*{1}}
\put(100,25){\circle*{1}} \put(100,30){\circle*{1}}
\put(100,35){\circle*{1}} \put(3,36.3){$*$} \put(24,2){$[k,1]$}
\put(24,54){$[2,1]$} \put(24,74){$[1,1]$} \put(44,2){$[k,2]$}
\put(44,54){$[2,2]$} \put(44,74){$[1,2]$} \put(92,2){$[k,w_k-1],$}
\put(92,54){$[2,w_2-1]$} \put(92,74){$[1,w_1-1]$}
\end{picture}
\]
with vertex set $I_\bw=\{ * \} \cup \{ [i,p] : 1\le i\le k, 1\le p < w_i \}$.
To simplify notation we write a subscript $[i,p]$ as $ip$ or $i,p$. We write $\Gamma_\bw$ for the root lattice $\Gamma_{Q_\bw}$, and $p_\bw$ for the function $p_{Q_\bw}$, so elements of $\Gamma_\bw$ are of the form
\[
\alpha = n_* \alpha_* + \sum_{i=1}^k \sum_{p=1}^{w_i-1} n_{ip} \alpha_{ip} \quad (n_*, n_{ip}\in\Z),
\]
and
\[
p_\bw(\alpha) = 1 - n_*^2 - \sum_{i=1}^k\sum_{p=1}^{w_i-1} n_{ip}^2
+ \sum_{i=1}^k\sum_{p=1}^{w_i-1} n_{ip-1} n_{ip}
\]
with the convention that $n_{i0}=n_\ast$ and $n_{iw_i}=0$. Given $\xi = (\xi_{ip})$ as above and $\alpha\in\Gamma_\bw$ we define
\[ \xi^{[\alpha]} = \prod_{i=1}^k \prod_{p=1}^{w_i} \xi_{ip}^{n_{ip-1} - n_{ip}} \]
again with $n_{i0} = n_*$ and $n_{iw_i}=0$.

In our parameterisation of the conjugacy classes $C_i$ the data $\bw,n_{ip}$ can thus be encoded as an element $\alpha_C=n\alpha_\ast+\sum_{i,p}n_{ip}\alpha_{ip}\in \Gamma_\bw$. Our main result is now as follows.

\begin{thm}
\label{t:dsp}
For there to be an irreducible solution to $A_1\dots A_k = 1$ with matrices $A_i\in C_i$
it is necessary and sufficient that $\alpha=\alpha_C$ be a positive root, that
$\xi^{[\alpha]}=1$, and $p_\bw(\alpha) > p_\bw(\beta)+p_\bw(\gamma)+\cdots$ for
any nontrivial decomposition of $\alpha$ as a sum of positive roots
$\alpha = \beta+\gamma+\cdots$ with $\xi^{[\beta]} = \xi^{[\gamma]} = \cdots = 1$.
\end{thm}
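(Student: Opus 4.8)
The plan is to pass, via the Riemann--Hilbert correspondence and the theory of weighted projective lines, from the matrix problem to the existence of a simple object in a category of connections, and thence to the existence of a simple module over a (deformed or multiplicative) preprojective algebra of $Q_\w$, for which the stated conditions are exactly the known --- or to-be-proved --- numerical criterion. The converse implication being already available from \cite{CBSh}, the task is the sufficiency direction: assuming $\alpha=\alpha_C$ is a positive root with $\xi^{[\alpha]}=1$ and $p_\w(\alpha)>p_\w(\beta)+p_\w(\gamma)+\dots$ for every nontrivial decomposition into positive roots with $\xi^{[\beta]}=\xi^{[\gamma]}=\dots=1$, we must \emph{construct} an irreducible tuple $(A_i)$.

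First I would record the dictionary. An irreducible solution of $A_1\cdots A_k=1$ with $A_i\in C_i$ is the same as an irreducible logarithmic connection on a vector bundle over $\PP^1$ with regular singularities at the $k$ marked points and monodromy classes $C_i$; encoding the ranks $n_{ij}$ and eigenvalues $\xi_{ij}$ as a parabolic structure, this becomes an object of $\BunConn$ for the weighted projective line $\XX$ of weight type $\w$, with exponents at the $i$-th exceptional point a chosen branch of $\log\xi_{i1},\dots,\log\xi_{i,w_i}$; the Fuchs relation is $\xi^{[\alpha]}=1$, and the underlying parabolic bundle has class $\alpha_C$ in $K_0(\Coh\XX)$. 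Since $\Coh\XX$ is hereditary with a tilting bundle and a connection is essentially a self-extension twisted by the canonical sheaf, a determinant-fixed part of $\CohConn$ is identified with modules over a preprojective algebra of $Q_\w$, deformed by an additive parameter $\lambda$ built from the exponents, with simple connections matching simple modules; here one must check that a simple coherent sheaf with connection is torsion-free and control the degree/determinant normalisation. By Crawley-Boevey's characterisation of the dimension vectors of simple modules over a deformed preprojective algebra, such a module of dimension $\alpha$ exists iff $\alpha$ is a positive root, $\lambda\cdot\alpha=0$, and $p_\w(\alpha)>\sum_i p_\w(\beta_i)$ over decompositions into positive roots $\beta_i$ with $\lambda\cdot\beta_i=0$; and because $\lambda\cdot\beta\in\Z$ iff $\xi^{[\beta]}=1$, while $\lambda\cdot\beta=0$ is stronger, the $p_\w$-hypothesis of the theorem implies the one needed for $\Pi^\lambda$. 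So for sufficiency it would suffice to exhibit one additive lift $\lambda$ of $\xi$ with $\lambda\cdot\alpha_C=0$ for which the associated simple module gives a connection with exactly the prescribed ranks $n_{ij}$.

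The main obstacle is precisely this reconciliation of the multiplicative data with an additive model. Different lifts of $\xi$ differ by integer vectors, and an integer shift of $\lambda$ amounts to tensoring the connection by a line bundle on $\XX$, equivalently to a reflection functor / middle convolution, which changes the parabolic type and hence the dimension vector; so a naive lift need neither satisfy $\lambda\cdot\alpha_C=0$ nor reproduce $\alpha_C$. The heart of the proof will be to show that the hypotheses --- which refer only to $\xi$ and the root system --- force the existence of a good lift, or, failing that, that one may replace $(\lambda,\alpha_C)$ by a reflection-equivalent pair for which a good lift exists, the conditions being stable under the reflections. I expect this to be carried out by an induction on $\dim\alpha_C$ (or on $p_\w(\alpha_C)$), using reflection functors to reduce to the case where $\alpha_C$ lies in the fundamental region of the Weyl group, a direct construction of an irreducible connection for an imaginary root in that region, and a dimension count showing the relevant moduli space has the expected dimension $2p_\w(\alpha_C)$, so that its generic point cannot decompose --- this last being where the $p_\w$-inequality is consumed. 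Making the reflection functors / middle convolutions behave correctly, since in the multiplicative setting they are only partially defined and interact delicately with the bundle structure, is the step I expect to be the most delicate.
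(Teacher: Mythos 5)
Your proposal proves the wrong implication. The paper states explicitly (and the abstract repeats) that the sufficiency direction --- the conditions on $\alpha_C$, $\xi^{[\alpha]}$ and $p_\w$ imply the existence of an irreducible tuple --- is already established in \cite{CBSh}; what remains, and what this paper actually proves, is \emph{necessity}: an irreducible solution forces $\alpha_C$ to be a positive root with $\xi^{[\alpha_C]}=1$ satisfying the strict $p_\w$-inequality against every nontrivial decomposition into positive roots $\beta,\gamma,\dots$ with $\xi^{[\beta]}=\xi^{[\gamma]}=\dots=1$. You have inverted this: you declare ``the converse implication being already available from \cite{CBSh}, the task is the sufficiency direction'' and then sketch a construction of an irreducible connection from the numerical hypotheses. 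That construction (moduli dimension count, generic simplicity in the fundamental region, reflection functors/middle convolution) is essentially the content of the existing sufficiency proof, so even if completed it would add nothing; and the necessity direction, which is the theorem's actual open content, is nowhere addressed.

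The necessity argument cannot be obtained by running your sketch backwards, because the obstruction is of a different nature. The paper's route is: translate via \cite[Lemma 8.3]{CBSh} into the existence of a simple module over the multiplicative preprojective algebra $\Lambda^{q_C}(Q_\w)$ of dimension $\alpha_C$ (Lemma~\ref{l:dspmulcorr}), then show (Theorem~\ref{t:mainthmformultpreproj}) that any dimension vector of a simple $\Lambda^q(Q)$-module lies in $\bar\Sigma_q$. After reflection-functor reductions this comes down to excluding simples in three critical configurations (Theorem~\ref{t:reducetocases}), and the hard cases --- $\alpha=hm\kdelta$ over an extended Dynkin quiver, and $\alpha=\alpha_\infty+h\kdelta$ over an ``extended tubular'' quiver --- are ruled out by a genuinely geometric argument: one realises the putative simple as an irreducible $\zeta$-connection on a parabolic bundle over a (extended) tubular weighted projective line and uses the slope/tube structure of $\Coh\XX$, perpendicular categories, and the known non-existence of simple modules for deformed preprojective algebras at dimension vectors that are multiples of $\delta'$ (Sections~\ref{s:tub} and~\ref{s:exttub}, Theorem~\ref{t:nosolspecialcases}). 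None of this nonexistence machinery appears in your proposal. Separately, even within the direction you do attempt, the step you yourself identify as the heart of the matter --- reconciling the multiplicative data $\xi$ with an additive parameter $\lambda$ so that $\lambda\cdot\alpha_C=0$ and the parabolic type is preserved --- is only stated as an expectation, not carried out; the standard resolution is to avoid the additive model altogether and work with multiplicative preprojective algebras and middle convolution as in \cite{CBSh}.
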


The sufficiency is proved in \cite{CBSh}. In section \ref{s:mainproof} we prove necessity (the combination of Lemma~\ref{l:dspmulcorr} and Theorem~\ref{t:mainthmformultpreproj}).

This result was already announced in \cite{CBicm}. The original plan was to prove the results of section~\ref{s:exttub} along the lines of \cite[Theorem 9.1]{CBmm}, with parabolic bundles and their connections replacing representations of quivers and deformed preprojective algebras. However, the general position argument used in \cite{CBmm} turned out to be difficult to write up, so this paper is based on the argument of~\cite{CBHu} instead.

A draft of this paper was completed already in 2018. Our plan was to combine the draft with results of the second author on coherent sheaves on weighted projective lines and connections on such sheaves, clarifying and improving the results in~\cite{CBcw}.
These results appeared in the second author's 2023 Habilitation thesis \cite{Hh}, and he announced a solution of the Deligne-Simpson Problem using them in his lectures at the 21st International Conference on Representations of Algebras in Shanghai in August 2024. But that work is still not completely written up, and with the announcement of another proof of the Deligne-Simpson Problem, obtained independently by Cheng Shu \cite{Shu}, we decided to revert back to the original, more direct, approach from 2018.

\section{Weighted projective lines and parabolic bundles}

Let $\XX$ be the weighted projective line over $\C$ consisting of the projective line $X=\PP^1$, 
a collection $D = (a_1,a_2,\dots,a_k)$ of distinct marked points in $\PP^1$,
and a weight sequence $\bw=(w_1,\dots,w_k)$ with $w_i\ge 1$
(where $w_i=1$ is equivalent to the point $a_i$ being unmarked).

Let $\Coh\XX$ be the category of coherent sheaves on $\XX$ defined by Geigle and Lenzing \cite{GL}. This is an hereditary abelian category; we call its objects \emph{parabolic sheaves}, and subobjects will be called \emph{parabolic subsheaves}. The category admits a split torsion pair, and the torsion parabolic sheaves form a uniserial length category. There is one simple torsion parabolic sheaf $S_a$ supported at each point $a\notin D$, and at the point $a_i$ there are simple torsion parabolic sheaves $S_{ip}$ ($0\le p < w_i$) with $\Ext^1(S_{ip},S_{iq})\neq0$ if and only if $p\equiv q+1$ modulo $w_i$.

There is a parabolic structure sheaf $\OO=\OO_\XX$, as well as a faithful action of the group $\mathbf L\coloneqq(\Z\oc\oplus\bigoplus_i\Z\ox_i)/\langle w_i\ox_i=\oc\;\forall i\rangle$. In particular, we have the standard short exact sequences
\[ 0\to \OO((p-1)\ox_i) \to \OO((p\ox_i) \to S_{ip} \to 0. \]
Moreover, Geigle and Lenzing show that the element $\oom = (k-2)\oc - \sum_{i=1}^k \ox_i \in \mathbf L$ induces Serre duality on $\Coh\XX$ in the form
\[ \Ext^1(\E,\F) \cong \Hom(\F,\E(\oom))^*. \]
In particular we have $S_{ip}(\oom)\cong S_{ip-1}$.

We can identify the Grothendieck group $K_0(\Coh\XX)$ with $\hat{\Gamma}=\Gamma\oplus\Z \partial$, where $\Gamma = \Gamma_\bw$ is the root lattice for the quiver $Q_\bw$, such that
\[ [\OO(k\oc)] = \alpha_* + k\partial, \quad [S_a] = \partial, \quad
[S_{ip}] = \begin{cases} \alpha_{ip} & (p\neq 0) \\
\partial - \sum_{q=1}^{w_i-1} \alpha_{iq} & (p=0). \end{cases} \]
We remark that we have followed \cite{Schiffmann}, rather than \cite{GL}, in our numbering of the $S_{ip}$ and the subsequent identification of the Grothendieck group.

The \emph{dimension vector} $\dimv \E\in \Gamma$ and degree $\deg_{\PP^1}\E \in\Z$ are defined by 
\[
[\E]=\dimv \E + (\deg_{\PP^1} \E)\partial
\]
and the \emph{rank} of $\E$ is the coefficient of $\alpha_\ast$ in $\dimv\E$.

The hereditary category $\Coh\XX$ has finite dimensional homomorphism and extension spaces, so its Euler form
\[ \langle \E,\F \rangle_\XX = \dim\Hom(\E,\F) - \dim\Ext^1(\E,\F) \]
descends to a bilinear form $\hat\Gamma\times\hat\Gamma\to\Z$ given as follows.

\begin{lem}
For $\alpha,\alpha'\in\Gamma$ we have
\[ \langle \alpha + k\partial, \alpha' + k'\partial \rangle_\XX =
\langle \alpha,\alpha'\rangle_{Q_\bw} + k' n_* - k n'_* \]
where $\alpha=n_\ast\alpha_\ast+\sum_{i,p}n_{ip}\alpha_{ip}$, and similarly for $\alpha'$.
\end{lem}

\begin{proof}
Straightforward.
\end{proof}

We remark that there is an exact functor $\pi\colon\Coh\XX\to\Coh\PP^1$, whose kernel is the Serre subcategory generated by $S_{ip}$ for $p\neq0$, and which preserves the rank and degree, so
\[ \rank\E = \rank(\pi\E) \quad\textrm{and}\quad \deg_{\PP^1}\E = \deg(\pi\E). \]
Moreover, $\pi$ admits fully faithful (and exact) left and right adjoints $\pi_!$ and $\pi_\ast$, so determines a recollement. We then have $\OO_\XX=\pi_!(\OO_{\PP^1})$ and $S_a=\pi_!(S_a)$ for all $a\notin D$.

Torsion-free parabolic sheaves on $\XX$ are also called \emph{parabolic bundles} on $\PP^1$, and following Lenzing \cite{L} they may be identified with collections $\E = (E,E_{ip})$ consisting of an (algebraic or holomorphic) vector bundle $E$ on $\PP^1$ and flags of vector subspaces
\[
E_{a_i} = E_{i0} \supseteq E_{i1} \supseteq \dots \supseteq E_{i,w_i-1} \supseteq E_{i,w_i} = 0
\]
of the fibres $E_{a_i}$ at each of the marked points $a_i$. In this case $E=\pi(\E)$, and our above description of the Grothendieck group was made in such a way that
\[
\dimv \E = (\rank E)\alpha_\ast + \sum_{i,p} (\dim E_{ip})\alpha_{ip} \in \Gamma.
\]
Morphisms between parabolic bundles are now given by those morphisms of vector bundles which preserve the flags. Up to isomorphism, the parabolic line bundles, equivalently the indecomposable rank one parabolic sheaves, are precisely the $\OO(\vec x)$ for $\vec x\in\mathbf L$.

We denote by $\Bun\XX$ the category of parabolic bundles on $\XX$.

\begin{lem}
\label{l:homtoshift}
Given parabolic bundles $\E = (E,E_{ip})$ and $\F = (F,F_{ip})$, and $1\leq j\leq k$, we can identify $\Hom(\F,\E(-\ox_j))$ with those homomorphisms $\theta\colon\F\to\E$ satisfying $\theta_{a_j}(F_{jq})\subseteq E_{jq+1}$ for all $0\leq q<w_j$.

Inductively, we can identify $\Hom(\F,\E(-\sum_i\ox_i))$ with those homomorphisms $\theta\colon\F\to\E$ satisfying $\theta_{a_i}(F_{ip})\subseteq E_{ip+1}$ for all $i$ and $p$.
\end{lem}

\begin{proof}
We claim that there is a monomorphism $\E(-\ox_j)\rightarrowtail\E$, so we can identify homomorphisms $\F\to\E(-\ox_j)$ with those homomorphisms $\theta\colon\F\to\E$ whose composite $\bar\theta\colon\F\to\E/\E(-\ox_j)$ vanishes.

On parabolic bundles the twist functor $\E\mapsto\E(-\ox_j)$ is given by `rotating cycles' \cite[\S4.1]{L}. Explicitly, given a parabolic bundle $\E=(E,E_{ip})$, we can form the pullback diagrams
\[ \begin{tikzcd}
0 \arrow[r] & E(-a_i) \arrow[r] \arrow[d,equal] & E^{ip} \arrow[r] \arrow[d,tail] & E_{ip} \arrow[r] \arrow[d,tail] & 0\\
0 \arrow[r] & E(-a_i) \arrow[r] & E \arrow[r] & E_{a_i} \arrow[r] & 0
\end{tikzcd} \]
whose bottom row is the standard short exact sequence, and hence obtain a cycle of vector bundles
\[ \begin{tikzcd}
E(-a_i)=E^{iw_i} \arrow[r,tail] & E^{iw_i-1} \arrow[r,tail] & \cdots \arrow[r,tail] &
E^{i1} \arrow[r,tail] & E
\end{tikzcd}\]

To describe $\E(-\ox_j)$, we replace $E$ by $E'\coloneqq E^{j1}$, and on the $j$-th arm we use the cycle
\[ \begin{tikzcd}
E'(-a_j) \arrow[r,tail] & E(-a_j) \arrow[r,tail] & \cdots \arrow[r,tail] & E^{j2} \arrow[r,tail] & E'
\end{tikzcd}\]
The quotients $E'_{jq}\coloneqq E^{jq+1}/E'(-a_j)$ then determine a flag of subspaces in the fibre $E'_{a_j}$.

On the $i$-th arm for $i\neq j$ we have the exact commutative diagram
\[ \begin{tikzcd}
0 \arrow[r] & E'(-a_i) \arrow[r] \arrow[d,tail] & E' \arrow[r] \arrow[d,tail] & E'_{a_i} \arrow[r] \arrow[d] & 0\\
0 \arrow[r] & E(-a_i) \arrow[r] & E \arrow[r] & E_{a_i} \arrow[r] & 0
\end{tikzcd}\]
The vertical morphisms on the left and in the middle are injective, and their cokernels are torsion sheaves supported at $a_j$. Thus the vertical map on the right, between torsion sheaves supported at $a_i$, must be an isomorphism. We can therefore identify the flag of subspaces in $E_{a_i}$ with a flag of subspaces in $E'_{a_i}$.

The parabolic bundle $\E(-\ox_j)$ is now given by $(E',E'_{ip})$, it is a parabolic subbundle of $\E=(E,E_{ip})$, and the quotient $\E/\E(-\ox_j)$ is a semisimple torsion parabolic sheaf supported at $a_j$. In fact, this quotient is the direct sum of $S_{jq}\otimes_\C(E_{jq}/E_{jq+1})$ for $0\leq q<w_j$.

Finally, given $\theta\colon\F\to\E$, the composite $\bar\theta\colon\F\to\E/\E(-\ox_j)$ is determined by the fibre map $\theta_{a_j}$ via
\[ \begin{tikzcd}
F_{jq} \arrow[r,"\theta_{a_j}"] & E_{jq} \arrow[r,two heads] & E_{jq}/E_{jq+1}
\end{tikzcd} \]
Thus $\bar\theta$ vanishes if and only if $\theta_{a_j}(F_{jq})\subseteq E_{jq+1}$ for all $q$.    
\end{proof}

If $\E = (E,E_{ij})$ is a parabolic bundle and $F$ is a vector bundle, one gets a parabolic bundle $\E\otimes F$ by endowing the vector bundle $E\otimes F$ with the parabolic structure given by the subspaces
\[
E_{ij}\otimes F_{a_i} \subseteq E_{a_i}\otimes F_{a_i} \cong (E\otimes F)_{a_i}.
\]
Lenzing's description of coherent sheaves on $\XX$ as $p$-cycles of sheaves \cite[\S4.1]{L} immediately gives the following.

\begin{lem}
\label{l:ctwist}
We have $\E(\oc)\cong \E\otimes \OO(1)$.
\end{lem}

\section{Connections on parabolic bundles}

We will write $\Omega = \Omega^1_{\PP^1} (\log D)$ for the sheaf of differential 1-forms on $\PP^1$ with logarithmic poles on $D$. It is a line bundle on $\PP^1$ of degree $k-2$ so isomorphic to $\OO(k-2)$. For each point $a_i\in D$, the map sending a logarithmic form to its residue at $a_i$ induces a canonical isomorphism $\Omega_{a_i}\cong\C$. Explicitly, we can take as basis element $\frac{dz}{z}$ for any local co-ordinate around $a_i$ with $z(a_i)=0$.

Given a vector bundle $E$ on $\PP^1$, a \emph{logarithmic connection} on $E$, singular over $D$, is a homomorphism of sheaves of vector spaces
\[ 
\nabla \colon E\to E\otimes \Omega 
\]
satisfying the Leibnitz identity
\[ 
\nabla(fs) = f\nabla(s) + s\otimes df 
\]
for a local section $s$ of $E$ and a regular function $f$. Making the above identification of fibres, each logarithmic connection $\nabla$ on $E$ induces a linear endomorphism $\Res_{a_i}\nabla$ of $E_{a_i}$, called the \emph{residue} at $a_i$.

Extending ideas of Atiyah, Mihai constructed a functorial exact sequence
\[ \begin{tikzcd}
0 \arrow[r] & E\otimes\Omega \arrow[r] & M(E) \arrow[r] & E \arrow[r] & 0
\end{tikzcd}\]
such that logarithmic connections on a vector bundle $E$ are in bijection with sections of this sequence. He further described a necessary and sufficient condition for the existence of a logarithmic connection on a vector bundle in terms of the Euler form, and as a consequence obtained the following criterion. Here, as mentioned in \cite[\S7]{CBipb}, we use the opposite sign convention for residues.

\begin{lem}[{\cite[Corollaire 3]{Mihai}}]
\label{l:conntrace}
If $\nabla\colon E\to E\otimes \Omega$ is a logarithmic connection on a vector bundle $E$, then
\[ \sum_{i=1}^k \tr(\Res_{a_i} \nabla) = -\deg E. \]
\end{lem}

We remark that this sequence is a pushout of the sequence constructed by Atiyah, which corresponds to the case $D=\emptyset$. Also, as soon as $D$ contains at least two points, there will be torsion sheaves for which the above sequence splits.

Now let $\XX$ be a weighted projective line having weight sequence $\bw = (w_1,\dots,w_k)$, and fix a collection of complex numbers $\zeta = (\zeta_{ip})$ ($1\le i\le k$, $1\le p\le w_i$). In \cite{CBipb} the first author introduced the notion of a \emph{$\zeta$-connection} on a parabolic bundle $\E=(E,E_{ip})$, which is a logarithmic connection $\nabla$ on $E$ satisfying
\[ (\Res_{a_i}\nabla-\zeta_{ip})(E_{ip-1}) \subseteq E_{ip} \quad\textrm{for all }i,p, \]
and later in \cite{CBcw} constructed a functorial short exact sequence
\[ \begin{tikzcd}
0 \arrow[r] & \E(\oom)\arrow[r] & D_\zeta(\E)\arrow[r] & \E \arrow[r] & 0
\end{tikzcd} \]
with the property that $\zeta$-connections on a parabolic bundle $\E$ are in bijection with sections of this sequence. We again note that in general there will be torsion parabolic sheaves for which the above sequence splits.

We denote by $\CohConn_\zeta \XX$ the category of pairs $(\E,s)$ consisting of a parabolic sheaf $\E$ on $\XX$ together with a section $s:\E\to D_\zeta(\E)$ for the above sequence. The morphisms $\theta\colon(\E,s)\to(\E',s')$ are morphisms of parabolic sheaves $\theta\colon\E\to\E'$ which commute with the sections, $s'\theta=D_\zeta(\theta)s$. It is shown in \cite{Hh} that $\CohConn_\zeta\XX$ is a length category.

The subobjects of $(\E,s)$ correspond to parabolic subsheaves $i\colon\F\rightarrowtail\E$ such that $si$ factors through $D_\zeta(i)$; equivalently the following composite morphism vanishes
\[ \begin{tikzcd}
\F \arrow[r,"i"] & \E \arrow[r,"s"] & D_\zeta(\E) \arrow[r] & D_\zeta(\E/\F).
\end{tikzcd}\]
In this case we also call $\F$ an \emph{invariant parabolic subsheaf} of $(\E,s)$. The \emph{simple}, or \emph{irreducible}, objects in $\CohConn_\zeta\XX$ are those pairs $(\E,s)$ admitting no proper non-zero invariant parabolic subsheaves.

We record the following easy criterion for the existence of an invariant parabolic subsheaf.

\begin{lem}
\label{l:dirsumnotirred}
Suppose $\E=\E^1\oplus\E^2$ with $\Ext^1(\E^2,\E^1)=0$. Then $\E^1$ is an invariant parabolic subsheaf of any $(\E,s)\in\CohConn_\zeta\XX$.
\end{lem}

\begin{proof}
We observe that, by functoriality, the composite
\[ \begin{tikzcd}
\E^1 \arrow[r] & \E \arrow[r,"s"] & D_\zeta(\E) \arrow[r] & D_\zeta(\E^2) \arrow[r] & \E^2
\end{tikzcd}\]
equals the composite $\E^1\to\E\to\E^2$, so vanishes. Thus the morphism $\E^1\to D_\zeta(\E^2)$ factors through $\E^2(\oom)$, but $\Hom(\E^1,\E^2(\oom))\cong\Ext^1(\E^2,\E^1)^\ast=0$ by Serre duality.
\end{proof}

We write $\BunConn_\zeta\XX$ for the full subcategory of pairs $(\E,s)$, equivalently pairs $(\E,\nabla)$, where $\E=(E,E_{ip})$ is a parabolic bundle. A parabolic subsheaf $\F=(\F,F_{ip})$ of $\E$ is invariant for $\nabla$ if and only if
\[ (\Res_{a_i}\nabla-\zeta_{ip})(F_{ip-1})\subseteq F_{ip} \quad\textrm{for all }i,p. \]
In particular, given $F\subseteq E$, we can obtain one invariant parabolic subsheaf by setting $F_{ip}\coloneqq F_{a_i}\cap E_{ip}$ for all $i,p$.

Given $\alpha = n_* \alpha_* + \sum_{i=1}^k \sum_{p=1}^{w_i-1} n_{ip} \alpha_{ip}\in\Gamma_\bw$ we define
\[ \zeta*{[\alpha]} \coloneqq \sum_{i=1}^k \sum_{p=1}^{w_i} \zeta_{ip}{(n_{ip-1} - n_{ip})}, \]
with the convention that $n_{i0} = n_*$ and $n_{i,w_i}=0$. It now follows from Lemma~\ref{l:conntrace} that if $\nabla$ is a $\zeta$-connection on a parabolic bundle $\E$, then $\deg_{\PP^1}\E+\zeta\ast[\dimv\E]=0$. The next result shows that this essentially characterises when $\zeta$-connections exist.

\begin{lem}
\label{l:degdimconn}
A parabolic bundle $\E$ admits a $\zeta$-connection if and only if $\deg_{\PP^1} \E' +\zeta*[\dimv \E']=0$ for all indecomposable direct summands $\E'$ of $\E$. In particular, if there exists $(\E,\nabla)\in\BunConn_\zeta\XX$, then $\deg_{\PP^1} \E +\zeta*[\dimv \E]=0$.
\end{lem}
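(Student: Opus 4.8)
The plan is to reduce everything to the indecomposable case and then use the trace formula of Lemma~\ref{l:conntrace}. First I would observe that a $\zeta$-connection on $\E = \bigoplus_r \E_r$ restricted to a summand need not preserve that summand, so the ``only if'' direction is not completely formal; the key input is that the set of $\zeta$-connections on $\E$ forms a torsor (possibly empty) over $\Hom(\E,\E(\oom))$, which by Lemma~\ref{l:homtoshift} is identified with the space $H_{\E,\E}$ of residue-compatible bundle maps $E\to E\otimes\Omega$. The correct statement to prove is that a $\zeta$-connection exists on $\E$ if and only if it exists on each indecomposable summand, for which I would use the endomorphism algebra: $\End_{\Bun\XX}(\E)$ is semiperfect, so idempotents cutting out the indecomposable summands lift, and a $\zeta$-connection $\nabla$ on $\E$ can be ``averaged'' by these idempotents (using that $\Hom(\E,\E(\oom))$ is a bimodule over $\End(\E)$) to produce one that is block-diagonal with respect to the decomposition. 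Concretely, if $e_r$ is the idempotent onto $\E_r$, then $\sum_r e_r \nabla e_r$ differs from $\nabla$ by an element of $\Hom(\E,\E(\oom))$ (the off-diagonal part), hence is again a $\zeta$-connection, and it restricts to a $\zeta$-connection on each $\E_r$. The converse, that block-diagonal connections on the summands assemble to one on $\E$, is immediate.

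Next, for an indecomposable parabolic bundle $\E' = (E',E'_{ij})$ carrying a $\zeta$-connection $\nabla$, I would compute $\sum_i \tr(\Res_{a_i}\nabla)$ in two ways. On one hand Lemma~\ref{l:conntrace} gives $\sum_i \tr(\Res_{a_i}\nabla) = -\deg E' = -\deg_{\PP^1}\E'$. On the other hand, the compatibility condition $(\Res_{a_i}\nabla - \zeta_{ij}1)(E'_{i,j-1})\subseteq E'_{ij}$ says exactly that $\Res_{a_i}\nabla$ induces, on each quotient $E'_{i,j-1}/E'_{ij}$, multiplication by $\zeta_{ij}$; filtering the fibre $E'_{a_i}$ by the flag and taking the trace through the associated graded yields $\tr(\Res_{a_i}\nabla) = \sum_{j=1}^{w_i}\zeta_{ij}\dim(E'_{i,j-1}/E'_{ij}) = \sum_{j=1}^{w_i}\zeta_{ij}(n'_{i,j-1}-n'_{ij})$ with $n'_{i0}=\rank E'$ and $n'_{i,w_i}=0$. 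Summing over $i$ and comparing with the definition of $\zeta*[-]$ gives $\deg_{\PP^1}\E' + \zeta*[\dimv\E'] = 0$, which is the necessary condition.

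For sufficiency, given that $\deg_{\PP^1}\E' + \zeta*[\dimv\E'] = 0$ for each indecomposable summand, I need to produce an actual $\zeta$-connection on each $\E'$. Here I would invoke the torsor structure: the obstruction to existence of a $\zeta$-connection on $\E'$ is the class of the extension $0\to\E'(\oom)\to D_\zeta(\E')\to\E'\to 0$ in $\Ext^1_{\CohConn_\zeta}(\ldots)$, but more concretely, since $\End_{\Bun\XX}(\E')$ is local, one can write down a logarithmic connection on $E'$ with prescribed residue eigenvalues whenever the trace obstruction (the only obstruction, by Lemma~\ref{l:conntrace} run in reverse via the theory of \cite{Mihai}) vanishes — and that obstruction is precisely $\deg_{\PP^1}\E' + \zeta*[\dimv\E']$. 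I would make this precise by choosing on each $E'_{a_i}$ an endomorphism $\rho_i$ that is ``upper triangular'' with respect to the flag $E'_{i\bullet}$ and has $\zeta_{ij}$ acting on the $j$-th graded piece; the vanishing of $\sum_i\tr\rho_i + \deg E'$ is exactly what is needed, by Mihai's result, to realise $(\rho_i)$ as the residues of a logarithmic connection on $E'$, and by construction this connection satisfies the flag-compatibility, hence is a $\zeta$-connection on $\E'$ in the sense of Lemma~\ref{l:zetaconnequiv}.

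The main obstacle I anticipate is the sufficiency direction: turning the single numerical condition into an existence statement requires either a precise ``surjectivity with prescribed trace'' form of Mihai's theorem or, alternatively, a direct deformation-theoretic argument showing that the obstruction space to lifting a residue datum to a connection is one-dimensional and detected by the trace. The block-diagonalization in the first step is routine but needs the semiperfectness of $\End_{\Bun\XX}(\E)$ to be invoked cleanly, and one must check that $\sum_r e_r\nabla e_r$ genuinely satisfies Leibniz and the residue conditions — which it does because each $e_r$ is $\OO_X$-linear and flag-preserving.
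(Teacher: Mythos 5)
The paper's own ``proof'' is a citation: the full statement is \cite[Theorem 7.1]{CBipb}, and only the final assertion (the necessary condition for $\E$ itself) is derived directly from Lemmas~\ref{l:conntrace} and~\ref{l:zetaconnequiv}. Your reduction to indecomposable summands (the off-diagonal blocks of $\nabla$ satisfy the shifted residue condition, hence lie in $H_{\E,\E}\cong\Hom(\E,\E(\oom))$, so the block-diagonal part is again a $\zeta$-connection) and your trace computation for necessity are both correct, and they match the argument the paper gives for the last part and reuses in Lemma~\ref{l:dirsumnotirred}.

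The gap is in sufficiency. You claim that, by ``Mihai's result run in reverse'', the single condition $\sum_i\tr\rho_i=-\deg E'$ allows you to realise a prescribed tuple of fibre endomorphisms $(\rho_i)$ as the residues of a logarithmic connection on $E'$. That is false: Mihai's trace identity is only a necessary condition. For example, take $E=\OO(n)\oplus\OO(-n)$ on $\PP^1$ with a single marked point $a$ and $n>0$: the diagonal blocks of any logarithmic connection on $E$ are themselves logarithmic connections on the line summands, so by Lemma~\ref{l:conntrace} the diagonal entries of $\Res_a\nabla$ are forced to equal $-n$ and $n$, and a traceless $\rho$ with different diagonal entries is not realisable. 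Since an indecomposable parabolic bundle can perfectly well have a decomposable underlying vector bundle, choosing $\rho_i$ upper triangular with respect to the flags does not circumvent these extra (Weil--Atiyah type) obstructions. The correct argument, which is what \cite[Theorem 7.1]{CBipb} carries out, identifies the obstruction to splitting $0\to\E'(\oom)\to D_\zeta(\E')\to\E'\to 0$ with a linear functional on $\End(\E')$ via Serre duality, shows that this functional vanishes on the radical, and concludes that for indecomposable $\E'$ (local endomorphism ring) the only obstruction is its value on the identity, which is exactly $\deg_{\PP^1}\E'+\zeta*[\dimv\E']$. You mention this obstruction-theoretic route in passing, but then substitute the invalid residue-realisation claim for it; the duality argument showing the obstruction is detected by the identity endomorphism alone is the missing step.
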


\begin{proof}
This is \cite[Theorem 7.1]{CBipb}. In \cite{Hh} this is generalised to cover all parabolic sheaves.
\end{proof}

\section{The tubular case}
\label{s:tub}

We begin with some standard definitions which are valid for all $\XX$, but particularly useful in the tubular case. Let $\XX$ be a weighted projective line having weight sequence $\bw = (w_1,\dots,w_k)$. Let $w$ be the least common multiple of the components of $\bw$. We write $\deg_{\XX} \E$ for the degree of a parabolic sheaf in the sense of Geigle and Lenzing \cite[Proposition 2.8]{GL},
\[
\deg_{\XX} \E = w \deg_{\PP^1} \E + \sum_{i=1}^k \sum_{p=1}^{w_i-1} n_{ip} w/w_i \in \Z
\]
where  the $n_{ip}$ are given by
\[
\dimv \E = n_* \alpha_* + \sum_{i=1}^k \sum_{p=1}^{w_i-1} n_{ip} \alpha_{ip}.
\]
The \emph{slope} of a parabolic sheaf $\E$ is $\mu(\E) = \deg_{\XX} \E / \rank \E$, see \cite[\S 2.5]{LM},
so parabolic bundles have rational slope, and the torsion parabolic sheaves have slope $\infty$.
A parabolic sheaf $\E$ is \emph{semistable} (respectively \emph{stable}) if for every proper non-zero parabolic subsheaf $\F$ of $\E$ we have $\mu(\F)\le \mu(\E)$ (respectively $\mu(\F)<\mu(\E))$.

In the rest of this section we assume that $\XX$ is of \emph{tubular} type, or of \emph{virtual genus} 1, 
meaning that $\sum_{i=1}^k 1/w_i = k-2$. Assuming that the $w_i$ are non-increasing, the possible weight types are $(2,2,2,2)$, $(3,3,3)$, $(4,4,2)$ and $(6,3,2)$, and thus $w=w_1$. Observe that the quiver $Q_\bw$ is extended Dynkin, with $0=[1,w_1-1]$ as an extending vertex. Let $\delta$ be the minimal positive imaginary root for $Q_\bw$, which we recall satisfies $\delta_{ip-1}=\delta_{ip}+w/w_i$ for $1\leq p\leq w_i$. In particular, the coefficient of $\alpha_*$ is $w$. We note also that $\delta$ is a radical vector, so $(\delta,-)_{Q_\bw}=0$, and conversely every radical vector is an integer multiple of $\delta$.

\begin{lem}
\label{l:degxprop}
(i) For a parabolic sheaf $\E$ we have 
\[ \deg_{\XX} \E = \langle \delta - w\partial , [\E]\rangle_\XX
= w \deg_{\PP^1} \E + w \rank \E + \langle \delta , \dimv \E \rangle_{Q_\bw}. \]
In particular, if $\dimv\E \in \Z \delta$, then $\deg_\XX \E \in \Z w$.

(ii) For parabolic bundles $\E$ and $\E'$ of ranks $r$ and $r'$ we have
\[ w^2\langle [\E],[\E']\rangle_\XX = w r r' (\mu(\E')-\mu(\E)) +
\langle w \dimv \E - r \delta, w \dimv \E' - r' \delta \rangle_{Q_\bw}. \]
\end{lem}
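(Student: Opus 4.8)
The plan is to reduce everything to the formula for $\deg_\XX$ in terms of $\dimv$ and $\deg_{\PP^1}$ given at the start of the section, together with the formula for $\langle-,-\rangle_\XX$ from the earlier lemma. For part~(i), I would start from the defining equation $\deg_\XX\E = w\deg_{\PP^1}\E + \sum_{i,j} n_{ij}\, w/w_i$ and show that the correction term $\sum_{i,j} n_{ij}w/w_i$ equals $w\rank\E + \langle\kdelta,\dimv\E\rangle_{Q_\w}$. This is a purely combinatorial identity about the fixed vector $\kdelta$: writing $\kdelta = w\alpha_* + \sum_{i,j} d_{ij}\alpha_{ij}$ with known coefficients $d_{ij}$ (for the star-shaped extended Dynkin quiver, $d_{ij} = (w_i-j)\,w/w_i$ along the $i$-th arm), one computes $\langle\kdelta,\dimv\E\rangle_{Q_\w}$ from the explicit Euler form and checks, arm by arm, that the $\alpha_*$-contribution produces $-w\rank\E$ (since $\langle\alpha_*,\alpha_*\rangle=1$ but the arrows out of $*$ contribute $-\sum_i d_{i1}n_* = -\sum_i n_* w(w_i-1)/w_i$, and $\langle$ also has the $n_*^2$ term with coefficient $w$), while the arm contributions telescope to $\sum_{i,j} n_{ij}w/w_i - w\rank\E$ or similar; the bookkeeping here is the main thing to get right. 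Once this identity is in hand, the first equality $\deg_\XX\E = \langle\kdelta - w\partial,[\E]\rangle_\XX$ follows by expanding the right side with the Euler-form lemma: $\langle\kdelta - w\partial,\dimv\E + (\deg_{\PP^1}\E)\partial\rangle_\XX = \langle\kdelta,\dimv\E\rangle_{Q_\w} + (\deg_{\PP^1}\E)\cdot w - (-w)\cdot n_*$, matching the second expression. The final sentence of~(i) is immediate: if $\dimv\E\in\Z\kdelta$ then $\rank\E\in\Z w$ and $\langle\kdelta,\dimv\E\rangle_{Q_\w}\in\Z\langle\kdelta,\kdelta\rangle_{Q_\w} = 0$ since $\kdelta$ is imaginary (isotropic for the tubular quiver), and $\deg_{\PP^1}\E\in\Z$, so $\deg_\XX\E\in\Z w$.

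For part~(ii), I would simply expand the right-hand side using bilinearity of $\langle-,-\rangle_{Q_\w}$ and the fact that $\langle\kdelta,\kdelta\rangle_{Q_\w}=0$. Write $A = w\dimv\E - r\kdelta$ and $A' = w\dimv\E' - r'\kdelta$; then
\[
\langle A, A'\rangle_{Q_\w} = w^2\langle\dimv\E,\dimv\E'\rangle_{Q_\w} - wr'\langle\dimv\E,\kdelta\rangle_{Q_\w} - wr\langle\kdelta,\dimv\E'\rangle_{Q_\w}.
\]
Using part~(i) in the form $\langle\kdelta,\dimv\E\rangle_{Q_\w} = \deg_\XX\E - w\deg_{\PP^1}\E - wr$ (and similarly with $\E'$, $r'$), together with the antisymmetry-type relation $\langle\dimv\E,\kdelta\rangle_{Q_\w} = (\kdelta,\dimv\E)_{Q_\w} - \langle\kdelta,\dimv\E\rangle_{Q_\w}$ — where the symmetrized form $(\kdelta,-)_{Q_\w}$ vanishes on all of $\Gamma$ because $\kdelta$ is the radical generator of the extended Dynkin form, hence $\langle\dimv\E,\kdelta\rangle_{Q_\w} = -\langle\kdelta,\dimv\E\rangle_{Q_\w}$ — one substitutes and collects terms. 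Meanwhile $w^2\langle[\E],[\E']\rangle_\XX = w^2\langle\dimv\E,\dimv\E'\rangle_{Q_\w} + w^2(r'\deg_{\PP^1}\E - r\deg_{\PP^1}\E')$ by the Euler-form lemma. Subtracting, the $w^2\langle\dimv\E,\dimv\E'\rangle$ terms cancel and what remains should organize into $wrr'(\mu(\E')-\mu(\E)) = r'\deg_\XX\E' \cdot (r/r)\cdots$ — more precisely, $wrr'(\mu(\E')-\mu(\E)) = w(r\deg_\XX\E' - r'\deg_\XX\E)/1$ after clearing the slope denominators; this is where care is needed to confirm the degree and rank factors match exactly.

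I expect the main obstacle to be part~(i): verifying the combinatorial identity $\sum_{i,j} n_{ij}\,w/w_i = w\rank\E + \langle\kdelta,\dimv\E\rangle_{Q_\w}$ requires knowing the explicit coordinates of $\kdelta$ on each arm and carefully handling the $*$-vertex term and the telescoping along the arms; a sign error or an off-by-one in the arm length is easy to make. Part~(ii), by contrast, is then essentially formal bilinear algebra once one records that $\kdelta$ is isotropic and lies in the radical of the symmetrized tubular form. Given the paper's style ("Straightforward.") I would likely present~(i) with the key identity spelled out and~(ii) as a one-line expansion.
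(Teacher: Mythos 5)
Your plan is correct and is exactly the direct verification the paper has in mind (its proof is simply ``Straightforward''): the key identity $\sum_{i,j} n_{ij}\,w/w_i = w\,\rank\E + \langle\kdelta,\dimv\E\rangle_{Q_\w}$ does hold, since with $\kdelta_{ij}=(w_i-j)w/w_i$ the arm terms telescope to $\sum_{i,j} n_{ij}\,w/w_i$ while the $*$-vertex contributes $w n_* - \bigl(\sum_i (w_i-1)w/w_i\bigr)n_* = -w n_*$ using $\sum_i 1/w_i = k-2$. Part (ii) then follows by the bilinear expansion you describe, using that $\kdelta$ is isotropic and lies in the radical of the symmetrized form so that $\langle\dimv\E,\kdelta\rangle_{Q_\w} = -\langle\kdelta,\dimv\E\rangle_{Q_\w}$.
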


\begin{proof}
Straightforward calculation.
\end{proof}

The representation theory of a tubular weighted projective line has been worked out by Geigle and Lenzing \cite{GL} and Lenzing and Meltzer \cite{LM}. The indecomposable parabolic bundles of a given slope $q\in\Q$ are all semistable, from which it follows that $\Hom(\E,\F)=0$ for indecomposables $\E$ and $\F$ with $\mu(\E)>\mu(\F)$. Next, the indecomposables of slope $q$ determine a uniserial length category $\CC_q$, stable under Serre duality. The Auslander--Reiten components are all tubes \cite[Theorem 5.6]{GL}, and the relative simple objects in $\CC_q$ are precisely the stable parabolic sheaves. Finally, the category $\CC_q$ is equivalent to the category of torsion parabolic sheaves \cite[Theorem 4.4]{LM}, so the ranks of the inhomogeneous tubes are equal to the components of $\bw$.

\begin{lem}
\label{l:slopeqprop}
Let $T$ be a tube of slope $q\in\Q$ and set $\E$ to be the direct sum of one copy of each stable parabolic bundle in $T$.

(i) We have $\dimv \E = a\delta$, where $a=\rank\E/w$ is a positive integer.

(ii) The dimension vectors of the stable parabolic bundles in $T$ are linearly independent.

(iii) Each indecomposable parabolic bundle of slope $<q$ has a non-zero map to $\E$
and each indecomposable parabolic bundle of slope $>q$ has a non-zero map from $\E$.
\end{lem}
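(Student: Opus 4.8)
The plan is to work inside the uniserial abelian category $\mathcal{C}_q$ of semistable parabolic sheaves of slope $q$, using the fact (from \cite[Theorem 5.6]{GL}, \cite[Theorem 4.4]{LM}) that this category is equivalent to the category of torsion parabolic sheaves on a weighted projective line, and hence that $T$ is a standard tube of rank equal to some component $w_i$ of $\mathbf{w}$. Let $r$ be the rank of the tube, and let $\E_0,\E_1,\dots,\E_{r-1}$ be the stable parabolic bundles lying on the mouth of $T$, ordered cyclically so that the Auslander--Reiten translate sends $\E_\ell$ to $\E_{\ell-1}$ (indices mod $r$); then $\E = \bigoplus_\ell \E_\ell$.

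For (i): in a standard tube of rank $r$ the indecomposable $M_\ell$ of quasi-length $r$ with quasi-top $\E_\ell$ has a filtration with factors $\E_\ell,\E_{\ell-1},\dots,\E_{\ell-r+1}$, so $[M_\ell]=\sum_\ell[\E_\ell]=[\E]$ in the Grothendieck group; but $M_\ell$ is $\tau$-periodic, hence its class is fixed by the Coxeter transformation, and the fixed sublattice of the Coxeter transformation for an extended Dynkin quiver is $\Z\kdelta\oplus\Z\partial$. So $\dimv\E=k\kdelta$ for some $k\in\Z$, and $k>0$ since $\E$ is a non-zero parabolic bundle (its rank $n_*$ is positive and equals $kw$). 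For (ii): the classes $[\E_0],\dots,[\E_{r-1}]$ in $\hat\Gamma$ are the quasi-simple classes of a tube; a standard argument (e.g. via the Euler form restricted to the tube, which is the Euler form of a cyclic quiver $\tilde A_{r-1}$, or via irreducibility of the defining equations) shows they are linearly independent over $\Q$ — in particular their dimension vectors in $\Gamma$ are linearly independent, since degrees can be recovered from the Euler pairing against $[\OO]$ and the dimension vectors determine the rest. (One must check the degree information does not collapse the independence; this follows because $\sum_\ell[\E_\ell]=k\kdelta+d\partial$ with $\kdelta$ primitive imaginary, so no nontrivial rational relation among the $[\E_\ell]$ can hold.)

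For (iii): if $q\in w\Z$, I claim $r=1$, i.e. the tube of slope $q$ is homogeneous. Indeed, by Lemma~\ref{l:degxprop}(i), a parabolic sheaf with dimension vector in $\Z\kdelta$ has $\deg_\XX$ divisible by $w$, hence slope in $w\Z$; conversely I want that a stable parabolic bundle of slope in $w\Z$ already has $\dimv$ a multiple of $\kdelta$. This is where the main work lies: one computes $w^2\langle[\E_\ell],[\E_\ell]\rangle_\XX$ via Lemma~\ref{l:degxprop}(ii). Stability forces $\langle[\E_\ell],[\E_\ell]\rangle_\XX = \dim\End(\E_\ell)-\dim\Ext^1(\E_\ell,\E_\ell)$, and for a quasi-simple object in an inhomogeneous tube of rank $r>1$ one has $\End=\C$ and $\Ext^1=0$, so this Euler self-pairing is $1$; for a quasi-simple object in a homogeneous tube it is $0$. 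Plugging $\mu(\E_\ell)=q\in w\Z$ into Lemma~\ref{l:degxprop}(ii), the slope term becomes $0$ and one is left with $w^2\cdot(0\text{ or }1)=\langle w\dimv\E_\ell-r_\ell\kdelta,\ w\dimv\E_\ell-r_\ell\kdelta\rangle_{Q_\mathbf{w}}$, a value of the positive semidefinite form $q_{Q_\mathbf{w}}$ (radical $\Z\kdelta$) on a lattice vector; since that form takes only non-negative even values, and $w^2$ is too large to equal the value $1\cdot w^2$ coming from $q_{Q_\mathbf{w}}$ on a non-radical vector unless... — here I'd use that a stable bundle in an inhomogeneous tube would give Euler self-pairing $1$, forcing $q_{Q_\mathbf{w}}(w\dimv\E_\ell-r_\ell\kdelta)=w^2$, which one checks is impossible for the tubular quivers (the minimal positive value of $q_{Q_\mathbf w}$ outside the radical is $1$, attained at real roots, so the value $w^2\ge 4$ could only occur at a non-root vector, but a direct check on each of the four tubular types shows the constraint $n_*=0$ coming from $w\dimv\E_\ell-r_\ell\kdelta$ having vanishing $\alpha_*$-coefficient is incompatible). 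So instead the tube is homogeneous, $r=1$, $\E=\E_0$, and then part (i) already gives $\dimv\E=k\kdelta$ — and re-running the computation with Euler self-pairing $0$ forces $q_{Q_\mathbf w}(w\dimv\E-k\kdelta)=0$ wait, with $r=1$ we get $w\dimv\E-\kdelta$... the cleanest route is: $r=1$ gives $\dimv\E=k\kdelta$ from (i), and then $\deg_\XX\E=qkw\in w\Z$ is automatic, with $k$ forced to be $1$ because $\E=\E_0$ is \emph{stable}, and a stable sheaf with $\dimv = k\kdelta$, $k>1$, would violate stability (it has a subsheaf of the same slope, namely any stable sheaf of slope $q$, which exists since tubes are non-empty), contradiction. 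Hence $k=1$.

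For (iv): let $\F$ be indecomposable of slope $\mu(\F)<q$. If $\F$ is a torsion sheaf then $\mu(\F)=\infty\not<q$, so $\F$ is a parabolic bundle; and since $\E$ contains a stable (hence every) quasi-simple of slope $q$, it suffices to produce a non-zero map $\F\to\E_\ell$ for some $\ell$, i.e. to show $\Hom(\F,\E)\ne 0$. Suppose not; then by Serre duality $\Ext^1(\E_\ell,\F)\cong\Hom(\F,\E_\ell(\oom))^*$, and on a tubular weighted projective line the twist by $\oom$ preserves slopes, so $\E_\ell(\oom)$ again has slope $q>\mu(\F)$. Thus $\langle[\F],[\E_\ell]\rangle_\XX=\dim\Hom(\F,\E_\ell)-\dim\Ext^1(\F,\E_\ell)$, and both $\Hom(\F,\E_\ell)$ and (by Serre duality, since $\mu(\F(\oom))=\mu(\F)<q$) $\Ext^1(\F,\E_\ell)=\Hom(\E_\ell,\F(\oom))^*$ would vanish: but then summing over the quasi-simples of $T$ gives $\langle[\F],[\E]\rangle_\XX=0$, while Lemma~\ref{l:degxprop}(ii) with $\dimv\E\in\Z\kdelta$ in its radical gives $w^2\langle[\F],[\E]\rangle_\XX = w\,r_\F\,r_\E(\mu(\E)-\mu(\F)) + \langle\text{radical vector},\dots\rangle = w\,r_\F\,r_\E(q-\mu(\F))>0$, a contradiction. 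The map from $\E$ case (slope $>q$) is dual, using Serre duality once more. The main obstacle is part (iii): pinning down why slope in $w\Z$ forces the homogeneous tube requires either a careful lattice computation on each of the four tubular weight types via Lemma~\ref{l:degxprop}, or an appeal to the explicit classification of tubular slopes in \cite{LM}; I expect the paper uses the latter, identifying which slopes carry inhomogeneous tubes and observing none of them lie in $w\Z$.
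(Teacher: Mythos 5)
Parts (i), (ii) and (iv) of your proposal track the paper's own arguments: (i) via $\tau$-invariance of $[\E]$ and the radical of the symmetrized form, (iv) via the same Euler-form positivity computation (just phrased as a contradiction), and (ii) via the Euler-form matrix of the tube --- though your parenthetical does not actually close the gap you correctly identify there; the paper handles it by observing that a relation $\sum_\ell m_\ell\dimv\E_\ell=0$ also kills the rank components, hence forces $\sum_\ell m_\ell\bigl(w\dimv\E_\ell-(\rank\E_\ell)\kdelta\bigr)=0$, whence via Lemma~\ref{l:degxprop}(ii) all $m_\ell$ are equal and then zero by (i).

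Part (iii) is wrong. You claim that $q\in w\Z$ forces the tube of slope $q$ to be homogeneous. This is false: in the tubular case the semistable sheaves of any fixed slope $q\in\Q$ form a tubular family equivalent to the category of torsion sheaves \cite[Theorem 4.4]{LM}, so there are inhomogeneous tubes of ranks $w_1,\dots,w_k$ at \emph{every} slope, including those in $w\Z$. If your claim were true, the hypothesis of (iii) would be contradictory and the statement vacuous, whereas it is the substantive assertion that the dimension vectors of the $r$ stables in $T$ sum to exactly $\kdelta$ rather than a higher multiple. Your supporting computation also fails at the decisive step: for $\alpha=\dimv\E_\ell$ a positive real root with $r_\ell=\alpha_*$ one has $q_{Q_\w}(w\alpha-r_\ell\kdelta)=w^2q_{Q_\w}(\alpha)=w^2$ identically, since $\kdelta$ lies in the radical of the form, so the value $w^2$ you assert is ``impossible for the tubular quivers'' is in fact always attained and rules nothing out. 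The paper's argument for (iii) is genuinely different and root-theoretic: $\alpha=\dimv\F$ is a positive real root, so $\alpha-\kdelta$ is a nonzero root; if it were positive one could realize it (using $q\in w\Z$ to get an integral degree) by an indecomposable of slope $q$ with $\langle[\G],[\F]\rangle_\XX=1$, hence a nonzero map $\G\to\F$ contradicting stability; therefore $\kdelta-\alpha$ is a positive root, realized by an indecomposable $\G$ of slope $q$ with $\langle[\G],[\F]\rangle_\XX=\langle[\F],[\G]\rangle_\XX=-1$, and the resulting non-split extensions in both directions together with uniseriality force $\F\oplus\G$ to involve every stable in $T$, so $\dimv\E\le\dimv(\F\oplus\G)=\kdelta$ and $k=1$. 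You would need to replace your part (iii) entirely by an argument of this kind.
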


\begin{proof}
(i) We know that $\E \cong \E(\oom)$, so 
\[
\langle \F,\E\rangle_\XX = \langle \F,\E(\oom)\rangle_\XX = - \langle \E,\F\rangle_\XX.
\]
Using Lemma~\ref{l:slopeqprop}, together with the fact that $\delta\in\Gamma$ is radical, we get
\[ (\dimv\E,\dimv\F)_{Q_\bw} = \langle\E,\F\rangle_\XX + \langle\F,\E\rangle_\XX = 0 \]
for all $\F$, and hence that $\dimv\E=a\delta$ for some integer $a$. Now note that $\rank\E=aw$, so $a$ is positive.

(ii) Let $T$ have rank $r$, and stable parabolic bundles $\mathcal S_i$ for $i\in\Z/r\Z$, indexed so that $\mathcal S_{i-1}\cong\mathcal S_i(\oom)$. Then
\[ \langle\mathcal S_i,\mathcal S_j\rangle_\XX = \mathbf 1_{[j=i]} - \mathbf 1_{[j=i-1]}. \]
Suppose some linear combination of their dimension vectors vanishes, say $\sum_i m_i \dimv \mathcal{S}_i = 0$. Then $\sum_i m_i\rank\mathcal S_i=0$, as this is the coefficient of $\alpha_\ast$, so
\[
\sum_i m_i \left(  w \dimv \mathcal{S}_i - (\rank \mathcal{S}_i) \delta \right) = 0.
\]
Thus for fixed $j$, Lemma~\ref{l:slopeqprop} gives
\[ 0 = \sum_i m_i \langle\mathcal S_i,\mathcal S_j\rangle_\XX = m_j - m_{j+1}. \]
We deduce that the $m_i$ are all equal, and then necessarily zero as $\sum_i\dimv\mathcal S_i>0$ by (i).

(iii) Take $\F\in\CC_{q'}$, say of rank $r'$. Then by (i), together with Lemma~\ref{l:slopeqprop}, we have
\[ \langle\E,\F\rangle_\XX = ar'(q'-q) = -\langle\F,\E\rangle_\XX. \]
If $q'>q$, then $\langle\E,\F\rangle_\XX>0$ and so there exists a non-zero homomorphism $\E\to\F$. Similarly, if $q'<q$, then there exists a non-zero homomorphism $\F\to\E$.
\end{proof}

For later use we record the following.

\begin{lem}
\label{l:dimsdistincttubes}
Suppose $\E\oplus\F\in\CC_q$ has dimension vector a multiple of $\delta$, where $\E$ and $\F$ are supported on distinct sets of tubes. Then $\dimv\E$ and $\dimv\F$ are also both multiples of $\delta$. In particular, for a tube $T\subset\CC_q$, each stable $\mathcal S\in T$ occurs with the same multiplicity as a composition factor of $\E$.
\end{lem}

\begin{proof}
We have
\[ \langle\E,\E\rangle_\XX = \langle\E,\E\oplus\F\rangle_\XX - \langle\E,\F\rangle_\XX = 0 \]
where the first summand in the middle vanishes by Lemma~\ref{l:slopeqprop}, and the second summand vanishes since $\E$ and $\F$ have disjoint supports. Thus $w\dimv\E-(\rank\E)\delta$ is a multiple of $\delta$, and hence so too is $\dimv\E$. The second statement now follows from Lemma~\ref{l:slopeqprop}.  
\end{proof}

We define the (right) perpendicular category of a parabolic sheaf $\F$ to be
\[ \F^\perp \coloneqq \{\E\in\Coh\XX \mid \Hom(\F,\E) = 0 = \Ext^1(\F,\E)\}. \]
This is an hereditary abelian subcategory, closed under extensions.

\begin{lem}
\label{l:HubnerLenzing}
If $\F$ is a stable parabolic bundle without self-extensions, then $\F^\perp$ is equivalent to the category of finite dimensional representations of an extended Dynkin quiver.
\end{lem}

\begin{proof}
By H\"ubner and Lenzing \cite{HL} (see also \cite[Theorem 4.2]{CBcw}), the perpendicular category is equivalent to the category of finite dimensional $A$-modules for some finite dimensional hereditary algebra $A$. It is therefore enough to check that $A$ is tame and connected.

Let $\F$ have slope $q$. We know that $\CC_q$ is a tubular family, and the tubes not containing $\F$ all belong to $\F^\perp$. Also, the intersection of the tube containing $\F$ with $\F^\perp$ is again a tube (of rank one less). We see that the Auslander--Reiten quiver of $A$ contains tubes, so $A$ must have a tame hereditary factor. Also, every indecomposable $A$-module is either in a tube, or has a non-zero map to or from a fixed tube, so $A$ is connected.
\end{proof}

\begin{lem}
\label{l:slopeqclass}
Let $T$ be a tube of slope $q\in\Q$, and $\E$ the direct sum of the stable parabolic bundles in $T$. Then $[\E]=a\delta+d\partial$ for coprime integers $a,d$ satisfying $a>0$ and $d/a=q-w$.
\end{lem}

\begin{proof}
Choose a different inhomogeneous tube $T'$ of slope $q$, and a stable parabolic bundle $\F\in T'$. By the previous lemma we have an equivalence $\F^\perp\cong\mmod A$ for some connected tame hereditary algebra $A$. As $T\subset\F^\perp$, we can regard it as a tube in $\mmod A$, and it is well-known that $[\E]$ equals the minimal positive imaginary positive root $\delta_A\in K_0(A)$.

Using Lemma~\ref{l:slopeqprop} (i) we have $[\E]=a\delta+d\partial$ for integers $a,d$ with $a$ positive, and as $\deg_\XX\E=w\deg_{\PP^1}\E+w\rank\E=dw+aw^2$ we see that $q=\mu(\E)=d/a+w$. Finally, we have $K_0(\Coh\XX)=K_0(\F^\perp)\oplus\Z[\F]$ and also $K_0(\F^\perp)=K_0(A)$. As $\delta_A\in K_0(A)$ is indivisible, so too is $a\delta+d\partial\in K_0(\Coh\XX)$, showing that $a$ and $d$ are coprime.
\end{proof}

\begin{prop} 
\label{p:notubularirred}
Suppose $(\E,\nabla)\in\BunConn_\zeta\XX$ has $\dimv\E=hm\delta$ where $s=\zeta\ast[m\delta]\in\Z$. If $h\geq2$, then $(E,\nabla)$ is not irreducible.
\end{prop}

\begin{proof}
Suppose first that $\E$ has indecomposable summands from at least two different tubes. In this case there is a non-trivial decomposition $\E = \E^1\oplus\E^2$ with $\Ext^1(\E^2,\E^1)=0$, so Lemma~\ref{l:dirsumnotirred} applies.

We are reduced to the case when $\E$ is contained in a single tube $T$, say of slope $q$. By Lemma~\ref{l:degdimconn} we have $\deg_{\PP^1}\E=-\zeta\ast[hm\delta]=-hs$, so $[\E]=h(m\delta-s\partial)$ and $q=w-s/m$. Moreover, we may further assume that $m>0$ is minimal such that $\zeta\ast[m\delta]\in\Z$, and hence that $m$ and $s$ are coprime.

Choose a different inhomogeneous tube $T'$ of slope $q$, and a stable parabolic bundle $\F\in T'$. By Lemma~\ref{l:HubnerLenzing} the category $\F^\perp$ is equivalent to $\mmod\C Q$ for some acyclic extended Dynkin quiver $Q$, the subcategory $\C_q\cap\F^\perp$ is equivalent to the subcategory of regular modules for $\C Q$, and $m\delta-s\partial$ corresponds to the minimal imaginary positive root $\delta_Q$. Thus $\E$ corresponds to a regular module $M$ of dimension vector $h\delta_Q$.

Finally, by \cite{CBcw}, the category of objects $(\G,s)\in\CohConn_\zeta\XX$ with $\G\in\F^\perp$ is equivalent to finite dimensional modules over some deformed preprojective algebra $\Pi^\lambda Q$. Thus $(\E,\nabla)$ corresponds to some $\Pi^\lambda Q$-module $\widetilde M$, restricting to the $\C Q$-module $M$. As $[M]=h\delta_Q$, necessarily $\lambda\cdot hb\delta_Q=0$, and hence also $\lambda\cdot\delta_Q=0$.

We are thus in the situation of \cite[Theorem 1.2]{CBmm}, which tells us that if $h\geq2$, then $\widetilde M$ is not simple, and hence $(\E,\nabla)$ is not irreducible. In fact, all that is needed is the argument of Case (I) in \cite[\S10]{CBmm}; alternatively one could use Crawley-Boevey and Hubery \cite{CBHu}.
\end{proof}

\section{The extended tubular case}
\label{s:exttub}

Let $\XX$ be a weighted projective line having weight sequence $\bw = (w_1,\dots,w_k)$. In this section we assume that $\XX$ is of \emph{extended tubular} type, meaning that $\bw$ is one of $(3,2,2,2)$, $(4,3,3)$, $(5,4,2)$, or $(7,3,2)$. Thus the quiver $Q_\bw$ is over-extended Dynkin: starting from an extended Dynkin diagram with extending vertex $0$, we adjoin a further vertex $\infty$ and a single arrow $\infty\to0$. In our case we have $0=[1,w_1-2]$ and $\infty=[1,w_1-1]$.

We write $S_\infty$ and $S_0$ for the simple torsion parabolic sheaves corresponding to the vertices $\infty$ and $0$, and observe that $S_0\cong S_\infty(\oom)$, and $S_\infty=S_{1,0}(\oom)$.

Let $\XX'$ be the weighted projective line of tubular type having weight sequence $\bw'=(w_1-1,w_2,\ldots,w_k)$ and the same marked points as $\XX$. As observed above, $0=[1,w_1-2]$ is an extending vertex for $Q_{\bw'}$. Also, $w'=w_1-1$ is the least common multiple of the components of $\bw'$. We denote by $\delta$ the minimal positive imaginary root for $Q_{\bw'}$, and write $\mu$ for the slope function for $\XX'$.

There is an exact functor $\pi'\colon\Coh\XX\to\Coh\XX'$, called \emph{reduction of weight} in \cite{GLperp}, and inducing a recollement between the two categories. The kernel of $\pi'$ is generated by $S_\infty$, and the fully faithful left and right adjoints $\pi'_!$ and $\pi'_\ast$ of $\pi$ are in fact both exact. The following lemma records some of their properties.

\begin{lem}
\label{l:recollement}
(i) On parabolic bundles, $\pi'(E,E_{ip})$ forgets the vector space $E_\infty$.

(ii) The left adjoint $\pi'_!$ induces an equivalence $\Coh\XX'\cong {}^\perp S_\infty$. On parabolic bundles, $\pi'_!(E,E_{ip})$ extends the parabolic structure by setting $E_\infty=0$. We have $\Ext^1_\XX(\pi'_!\F,\E) \cong \Ext^1_{\XX'}(\F,\pi'\E)$ for $\E\in\Coh\XX$ and $\F\in\Coh\XX'$.

(iii) The right adjoint $\pi'_\ast$ induces an equivalence $\Coh\XX'\cong S_\infty^\perp$. On parabolic bundles, $\pi'_\ast(E,E_{ip})$ extends the parabolic structure by setting $E_\infty=E_0$. We have $\Ext^1_\XX(\E,\pi'_\ast\F) \cong \Ext^1_{\XX'}(\pi'\E,\F)$ for $\E\in\Coh\XX$ and $\F\in\Coh\XX'$.
\end{lem}

\begin{proof}
For the second part of (ii) we note that $\dim\Hom_\XX(\E,S_\infty)=\langle\E,S_\infty\rangle_\XX=n_\infty$. Also, the kernel and cokernel of the counit $\pi'_!\pi'\E\to\E$ both lie in $\Ker\pi'=\add S_\infty$, so the third part follows by applying $\Hom_\XX(\pi'\F,-)$.
\end{proof}

We have $K_0(\Coh\XX)=K_0({}^\perp S_\infty)\oplus\Z[S_\infty]$, and we can use the left adjoint $\pi'_!$ to identify ${}^\perp S_\infty\cong\Coh\XX'$. This determines an inclusion $\Gamma_{\bw'}\subset\Gamma_\bw$ with image those elements not supported at $\alpha_\infty$. In particular this allows us to regard $\delta$ as an element of $\Gamma_\bw$.

Our main theorem in this section is the following.

\begin{thm}
\label{t:exttubular}
Let $\E=(E,E_{ip})$ be a parabolic bundle on $\XX$ with $\dimv\E=\alpha_\infty+h\delta$. Assume further that $s=\zeta\ast[\delta]\in\Z$ and $\zeta\ast[\alpha_\infty]=0$. If $h\geq2$, then any $(\E,\nabla)\in\BunConn_\zeta\XX$ is not irreducible.
\end{thm}

We will divide the proof into three propositions, according to how $\pi'\E$ decomposes in $\Coh\XX'$. For the rest of this section we fix a parabolic bundle $\E=(E,E_{ip})$ on $\XX$ having dimension vector $\dimv\E=\alpha_\infty+h\delta$, as well as a tuple of complex numbers $\zeta=(\zeta_{ip})$ such that $\zeta\ast[\alpha_\infty]=0$ and $s\coloneqq\zeta\ast[\delta]\in\Z$. We also set $\F=\pi'\E$, which is the parabolic bundle on $\XX'$ given by $\F=(E,E_{ip})$, but forgetting the vector space $E_\infty$.

First note that, by Lemma~\ref{l:degdimconn}, $\deg_{\PP^1}\E=-\zeta\ast[\dimv\E]=-hs$. As this is also the degree of $\F$ we get $[\F]=h(\delta-s\partial)$, and so $q=\mu(\F)=w'-s\in\Z$ by Lemma~\ref{l:degxprop}.

Suppose $\F=\F^1\oplus\F^2$ in $\Coh\XX'$. Then \textit{a fortiori} $E=F^1\oplus F^2$ in $\Coh\PP^1$, so the logarithmic connection $\nabla\colon E\to E\otimes\Omega$ decomposes as $(\nabla^{uv})$ for some $\C$-linear morphisms $\nabla^{uv}\colon F^v\to F^u\otimes\Omega$. The Leibniz identity tells us that $\nabla^{uu}$ is a logarithmic connection on $F^u$, whereas $\nabla^{uv}$ for $u\neq v$ is a morphism of sheaves. Similarly the residue $\Res_{a_i}\nabla$ decomposes as $(R_i^{uv})$, where $R_i^{uu}=\Res_{a_i}\nabla^{uu}$ is the residue of $\nabla^{uu}$, whereas $R_i^{uv}$ for $u\neq v$ is the composition
\[
F^v_{a_i}\to F^u_{a_i} \otimes \Omega_{a_i} \to F^u_{a_i}
\]
where the first map is the fibre of the morphism $\nabla^{uv}$ at $a_i$ and the second map is given by the canonical isomorphism $\Omega_{a_i} \cong \C$ mentioned at the start of \S3.

Finally, the condition $(\Res_{a_i}\nabla-\zeta_{ip})(E_{ip-1})\subseteq E_{ip}$ yields the conditions $(\Res_{a_i}\nabla^{uu}-\zeta_{ip})(F^u_{ip-1})\subseteq F^u_{ip}$ and $R_i^{uv}(F^v_{ip-1})\subseteq F_{ip}^u$ for all pairs $[i,p]$ other than $\infty=[1,w_1-1]$ and $[1,w_1]$.

We now set $\phi=(\phi^{uv})$ to be the restriction of $(\Res_{a_1}\nabla-\zeta_\infty)$ to $E_0=F^1_0\oplus F^2_0$ and observe that, since $\Ima(\phi)\subseteq E_\infty$, this matrix has rank one.

Our basic strategy will be to deduce that $\phi^{uv}=0$ for some pair $u\neq v$. Since $\oom = (k-2)\oc - \sum_{i=1}^k \ox_i$, and $\Omega\cong\OO(k-2)$, Lemmas~\ref{l:homtoshift} and \ref{l:ctwist} will then tell us that $\nabla^{uv}$ is a morphism of parabolic sheaves $\F^v\to\F^u(\oom_{\XX'})$ in $\Coh\XX'$. If we know that the only such homomorphism is zero, then we get an invariant parabolic subsheaf $\E^v$ of $(\E,\nabla)$ by taking $\F^v$ and extending it by $E^v_\infty\coloneqq\Ima\phi^{vv}$.

\begin{prop}
\label{p:exttubular1}
Suppose $\pi'\E\in\Coh\XX'$ has direct summands of distinct slopes. Then $(\E,\nabla)$ is not irreducible. In fact, it contains an invariant parabolic subsheaf restricting to a direct summand of $\pi'\E$.
\end{prop}

\begin{proof}
Set $\F\coloneqq\pi'\E$. Then $\F$ has slope $q=w'-s\in\Z$, so we can decompose $\F=\F^1\oplus\F^2\oplus\F^3$ such that $\F^1\in\CC_{<q}$, $\F^2\in\CC_q$, and $\F^3\in\CC_{>q}$. By assumption one, and hence both, of $\F^{1,3}$ is non-zero. Following our proof strategy we can decompose $\nabla=(\nabla^{uv})$ for $1\leq u,v\leq 3$, and similarly $\phi=(\phi^{uv})$.

Consider the parabolic subsheaf $\G$ of $\F^3$ where we replace $F^3_0$ by $G_0\coloneqq\Ker\phi^{13}$. Then $\nabla^{13}$ determines a sheaf homomorphism $\G\to\F^1(\oom_{\XX'})$. We claim, however, that $\G\in\CC_{\geq q}$, so the only such homomorphism is zero, and hence $\nabla^{13}=0$. To prove the claim let $\bar\G$ be a direct summand of $\G$ and consider the pushout diagram in $\Coh\XX'$
\[ \begin{tikzcd}
0 \arrow[r] & \G \arrow[r] \arrow[d,two heads] & \F^3 \arrow[r] \arrow[d,two heads] & S'_0 \arrow[r] \arrow[d,equal] & 0\\
0 \arrow[r] & \bar\G \arrow[r] & \bar\F^3 \arrow[r] & S'_0 \arrow[r] & 0
\end{tikzcd} \]
where $S'_0=\pi'(S_0)$ is the simple torsion parabolic sheaf in $\Coh\XX'$ corresponding to the extending vertex $0=[1,w_1-2]$.

The epimorphism $\F^3\twoheadrightarrow\bar\F^3$ shows that $\bar\F^3\in\CC_{>q}$, whereas $\deg_{\XX'}S'_0=\langle\delta,\alpha_0\rangle_{Q_{\bw'}}=1$ by Lemma~\ref{l:degxprop}. Setting $r=\rank\bar G=\rank\bar F^3$ we get
\[ \deg_{\XX'}\bar G = \deg_{\XX'}\bar F^3-1 > qr-1. \]
As both the degree and $qr$ are integers it follows that $\deg_{\XX'}\bar G\geq qr$ as required.

We have therefore shown that $\nabla^{13}=0$, and hence that $\phi^{13}=0$. As $\phi$ has rank at most one, either $\phi^{12}$ or $\phi^{23}$ must also vanish. In the first case we obtain an invariant parabolic subsheaf of $(\E,\nabla)$ restricting to $\F^2\oplus\F^3$, whereas in the second case there is an invariant parablic subsheaf restricting to $\F^3$.
\end{proof}

From now on we assume that $\F=\pi'\E$ lies in the subcategory $\CC_q$. The next case, when $\F$ lies in a single tube, is the most technical. We begin with the following result, generalising Lemma~\ref{l:HubnerLenzing}.

\begin{lem}
\label{l:HubnerLenzing2}
Let $T'\subset\Coh\XX'$ be an inhomogeneous tube of slope $q=w'-s\in\Z$. Then there exists a unique stable parabolic sheaf $\G'\in T'$ with $\Hom_{\XX'}(\G',\pi'S_{1,0})\neq0$. We set $\G\coloneqq\pi'_!\G'$. Then ${\G'}^\perp\cong\mmod\C Q'$ and $\G^\perp\cong\mmod\C Q$, with $Q$ an over-extended Dynkin quiver, arising from the extended Dynkin quiver $Q'$.

The vertex $\infty\in Q$ is a source, corresponding to the relative simple injective $S_\infty\in\G^\perp$, and the vertex $0\in Q'$ is also a source, corresponding to the relative simple injective $\pi'S_0\in{\G'}^\perp$.
\end{lem}

\begin{proof}
We know that $S'_{1p}\coloneqq\pi'S_{1p}$ for $[1,p]\neq\infty$ are the simple torsion parabolic sheaves in $\Coh\XX'$ supported at $a_1$, so under Serre duality they form a single orbit of size $w_1-1=w'$, and their direct sum has class $\partial\in K_0(\Coh\XX')$.

If now $\mathcal T'$ is the direct sum of the stable parabolic sheaves in $T'$, then this has class $\delta-s\partial$ by Lemma~\ref{l:slopeqclass}, which corresponds to the minimal imaginary positive root for $Q'$. As $\mathcal T'$ is stable under the Serre functor we see that
\[ w'\langle\mathcal T',S'_{1p}\rangle_{\XX'} = \langle\delta-s\partial,\partial\rangle_{\XX'} = w' \]
for any $[1,p]\neq\infty$.

It follows that there is a unique indecomposable direct summand of $\mathcal T'$ admitting a homomorphism to each $S'_{1p}$ for $[1,p]\neq\infty$. For $S'_{1,0}$ this gives a stable parabolic sheaf $\G'$. Applying the Serre functor shows that there is a non-zero homomorphism from $\G'(\oom_{\XX'})$ to $S'_{1,0}(\oom_{\XX'})=S'_0$, so by uniqueness we deduce that $S'_0$ lies in ${\G'}^\perp$.

It is clear that $S'_0$ is simple in ${\G'}^\perp$. To see that it is a relative injective, suppose $\Ext^1_{\XX'}(\mathcal H',S'_0)\neq0$ for some indecomposable parabolic sheaf $\mathcal H'\in\Coh\XX'$. Then $\mathcal H'$ is necessarily torsion with simple socle $S'_{1,0}$, which implies $\Hom_{\XX'}(\G',\mathcal H')\neq0$, and so $\mathcal H'\notin{\G'}^\perp$. Thus $S'_0$ is a relative injective, so the corresponding vertex $0\in Q'$ is a source. Moreover, as $\langle\delta-s\partial,[S'_0]\rangle_{\XX'}=1$, we see that $0$ is even an extending vertex.

Now set $\G\coloneqq\pi'_!\G'$, an indecomposable parabolic bundle in ${}^\perp S_\infty\subset\Coh\XX$ without self-extensions. Using Lemma~\ref{l:recollement} the adjoint functors $(\pi'_!,\pi')$ restrict to the categories $\G^\perp$ and ${\G'}^\perp$, and in fact we have a colocalisation sequence of hereditary abelian categories
\[ \begin{tikzcd}
\add S_\infty \arrow[r,yshift=-3pt] & \G^\perp \arrow[r,yshift=-3pt,"\pi'" swap] \arrow[l,yshift=3pt]
& {\G'}^\perp \arrow[l,yshift=3pt,"\pi'_!" swap]
\end{tikzcd} \]
In particular, $S_\infty$ is relative simple injective in $\G^\perp$.

By \cite{HL} we know that $\G^\perp$ is equivalent to $\mmod\C Q$ for some quiver $Q$, so $S_\infty$ corresponds to a source vertex $\infty$ in $Q$. The colocalisation sequence then tells us that $Q'$ is obtained from $Q$ by removing this vertex.

Finally, if $\Ext^1_\XX(S_\infty,\mathcal H)$ for some indecomposable $\mathcal H\in\Coh\XX$, then Serre duality gives $\Hom_\XX(\mathcal H,S_0)\neq0$, so $S_0$ is the only simple in $\G^\perp$ which can extend $S_\infty$. We already know that $\Ext^1_\XX(S_\infty,S_0)$ is one dimensional, so there is a unique arrow in $Q$ starting at $\infty$, and this ends at $0$.
\end{proof}

\begin{prop}
\label{p:exttubular2}
Suppose $\pi'\E$ lies in a single tube $T$. Then $(\E,\nabla)$ is not irreducible. In fact, it contains an invariant parabolic subsheaf whose restriction to $\XX'$ also lies in $T$.
\end{prop}

\begin{proof}
We choose a different inhomogeneous tube $T'$ of the same slope $q=w'-s\in\Z$. By the previous lemma there is a unique stable parabolic sheaf $\G'\in T'$ with $\Hom_{\XX'}(\G',\pi'S_{1,0})\neq0$. Setting $\G\coloneqq\pi'_!\G'$ we have that $\G^\perp\cong\mmod\C Q$ and ${\G'}^\perp\cong\mmod\C Q'$, where $Q'$ is extended Dynkin and $Q$ is over-extended Dynkin. The vertices $0$ and $\infty$ correspond to $S_0$ and $S_\infty$ respectively.

Now, as in Lemma~\ref{l:slopeqclass}, the category of regular $\C Q'$-modules corresponds to ${\G'}^\perp\cap\CC_q$, and $\delta_{Q'}=\delta-s\partial$. Thus $\F=\pi'\E$ corresponds to a regular module $M'$ of dimension vector $h\delta_{Q'}$. Similarly, the parabolic sheaf $\E\in\G^\perp$ corresponds to a $\C Q$-module $M$ having dimension vector $\alpha_\infty+h\delta_{Q'}\in K_0(Q)$, whose restriction to $Q'$ is of course $M'$.

Next, as in the proof of Proposition~\ref{p:notubularirred}, the category of objects $(\mathcal H,s)\in\CohConn_\zeta\XX$ with $\mathcal H\in\G^\perp$ is equivalent to finite dimensional modules over some deformed preprojective algebra $\Pi^\lambda Q$ \cite{CBcw}. Thus the pair $(\E,\nabla)$ corresponds to such a module $\widetilde M$ whose restriction to $Q$ is $M$. The existence of $\widetilde M$ tells us that $\lambda\cdot(\alpha_\infty+h\delta_{Q'})=0$.

On the other hand, let $\mathcal H$ be a stable parabolic bundle contained in some homogeneous tube of slope $q$. Then $\mathcal H'\in{\G'}^\perp$ and has class $[\mathcal H']=\delta-s\partial$, so $\mathcal H\coloneqq\pi'_!\mathcal H'$ is indecomposable, lies in $\G^\perp$, and has class $\delta-s\partial$. By Lemma~\ref{l:degdimconn} we know that $\mathcal H$ admits a $\zeta$-connection $\nabla$, in which case the pair $(\mathcal H,\nabla)$ corresponds to a module for $\Pi^\lambda Q$, and hence $\lambda\cdot\delta_{Q'}=0$.

We are therefore in the situation of \cite[Theorem 8.1]{CBHu}, so we conclude that $\widetilde M$ has a proper non-zero submodule whose restriction to $Q'$ is again regular. In other words, $(\mathcal E,\nabla)$ has a proper non-zero invariant parabolic subsheaf whose restriction to $\XX'$ lies in $T$.
\end{proof}

The final case is when $\F\in\CC_q$ is supported on at least two different tubes.

\begin{prop}
\label{p:exttubular3}
Suppose $\F=\pi'\E$ contains direct summands from distinct tubes in $\CC_q$. Then $(\E,\nabla)$ is not irreducible. In fact, it contains an invariant parabolic subsheaf whose restriction again lies in $\CC_q$.
\end{prop}

\begin{proof}
We decompose $\F=\F^1\oplus\F^2$ with $\F^1$ supported on a single tube $T$, and $\F^2$ supported away from $T$. Then $\dimv\F=h\delta$, so $\dimv\F^u=h^u\delta$ by Lemma~\ref{l:dimsdistincttubes}. Since they also have slope $q=w'-s$ we must have $[\F^u]=h^u(\delta-s\partial)$.

Assume $\phi^{11}=0$. Then, as $\phi$ has rank one, we must have $\phi^{uv}=0$ for some $u\neq v$, in which case $\nabla^{uv}$ is a homomorphism of parabolic sheaves $\F^u\to\F^v(\oom_{\XX'})$. As $\F^{1,2}$ are supported on distinct tubes in $\CC_q$, we conclude that $\nabla^{uv}=0$, and thus $(\E,\nabla)$ has an invariant parabolic subsheaf $\E^v$ restricting to the direct summand $\F^v$ of $\F$.

In particular, this applies when $h^1=1$. For, following our basic strategy, we know that $\nabla^{11}$ is a logarithmic connection on $\F^1$ whose residues have diagonal entries involving $\zeta_{jq}$ and $\phi^{11}$, so by the result of Mihai, Lemma~\ref{l:conntrace},
\[ -\deg_{\PP^1}\F^1 = \sum_i\tr\Res_{a_i}\nabla^{11} = \zeta\ast[\dimv\F^1]+\tr\phi^{11}
= h^1s+\tr\phi^{11}. \]
As $\deg_{\PP^1}\F^1=-h^1s$, we see that $\phi^{11}$ is a traceless endomorphism of $F^1_0$, which is one-dimensional, as $\dimv F^1=\delta$, and hence $\phi^{11}=0$ as claimed.

Suppose instead that $h^1\geq2$ and that $\phi^{11}\neq0$. We first lift $\F^1$ to a parabolic bundle $\G$ on $\XX$ by setting $G_\infty\coloneqq\Ima(\phi^{11})$, so that $\nabla$ restricts to a $\zeta$-connection on $\G$. Now $\G$ has dimension vector $\alpha_\infty+h^1\delta$, so by Proposition~\ref{p:exttubular2} the pair $(\G,\nabla)$ contains a proper non-zero invariant parabolic subsheaf $\mathcal H$ whose restriction $\mathcal H'$ to $\XX'$ also lies in $T$.

We can therefore write $E_0=U\oplus V\oplus W$, where $F^1_0=U\oplus V$ and $F^2_0=W$, with respect to which $\phi$ decomposes as
\[ \phi = \begin{pmatrix}
\psi^{00}&\psi^{01}&\psi^{02}\\
0&\psi^{11}&\psi^{12}\\
\psi^{20}&\psi^{21}&\psi^{22}
\end{pmatrix}. \]
Note that the top left $2\times2$ submatrix is $\phi^{11}$. Now, as $\phi$ has rank at most one, either $\psi^{20}$ or $\psi^{12}$ vanishes.

In the first case, when $\psi^{20}=0$, we see that $\nabla^{21}$ restricts to a morphism $\mathcal H'\to\F^2(\oom_{\XX'})$, but these sheaves have distinct supports, so the only such map is zero. It follows that $\mathcal H$ is an invariant parabolic subsheaf of $(\E,\nabla)$.

In the second case, when $\psi^{12}=0$, we see that $\nabla^{12}$ restricts to a morphism $\F^2\to(\F^1/\mathcal H')(\oom_{\XX'})$. Again, both sheaves lie in $\CC_q$ but have distinct supports, so the only such map is zero. It follows that there is an invariant parabolic subsheaf of $(\E,\nabla)$ restricting to $\mathcal H'\oplus\F^2$.
\end{proof}

Theorem~\ref{t:exttubular} now follows from Propositions~\ref{p:exttubular1}, \ref{p:exttubular2} and \ref{p:exttubular3}.

\section{The Deligne--Simpson Problem}


The fundamental group of $\PP^1-D$ equals $\pi_1=\langle g_1,\ldots,g_k\mid g_1\cdots g_k=1\rangle$, so $\pi$-representations are given by tuples of matrices $(A_1,\ldots,A_k)$ in $\GL_n(\C)$ satisfying $A_1\cdots A_k=1$. One can naturally ask to what extent one can fix the conjugacy classes $C'_i$ of the matrices $A_i$, and the Deligne--Simpson Problem asks whether we can find an irreducible representation, so where the matrices $A_i$ have no common proper non-zero invariant subspace.

If we fix a transversal $T$ to $\Z$ in $\C$, such as $T = \{z \mid 0 \le \mathrm{Re}(z)<1\}$, then we can define the category $\BunConn_T\PP^1$ of pairs $(E,\nabla)$ consisting of a vector bundle $E$ on $\PP^1$ equipped with a logarithmic connection $\nabla\colon E\to E\otimes\Omega$, singular over $D$, and with each residue $\Res_{a_i}\nabla$ having eigenvalues in $T$.

\begin{thm}[Riemann--Hilbert Correspondence]
Taking monodromy yields an equivalence of categories
\[ \BunConn_T\PP^1 \cong \rep\pi_1 \]
In particular, $\BunConn_T\PP^1$ is an abelian length category.
\end{thm}

\begin{proof}
In this form the result can be found as Theorem 6.1 in \cite{CBipb}.
\end{proof}

Let us write $\I=\sqrt{-1}$. Then the assignment $A\mapsto\exp(-2\pi\I A)$ induces a bijection between tuples of conjugacy classes of matrices of size $n$ with eigenvalues in $T$ and tuples of conjugacy classes in $\GL_n(\C)$. Explicitly, given conjugacy classes $(C'_1,\ldots,C'_k)$ in $\GL_n(\C)$, say given by the data $\alpha\in\Gamma_\bw$ and non-zero complex numbers $\xi=(\xi_{ip})$, it arises as the image of the tuple of conjugacy classes $(C_1,\ldots,C_k)$ given by the data $\alpha,\zeta$, where $\zeta_{ip}\in T$ satisfies $\xi_{ip}=\exp(-2\pi\I\zeta_{ip})$.

We next consider the subcategory $\rep_\xi\pi_1$ consisting of those representations $(A_1,\ldots,A_k)$ satisfying $(A_i-\xi_{iw_i})\cdots(A_i-\xi_{i1})=0$. This subcategory is closed under subquotients and direct sums, so is an abelian subcategory, but it is not closed under extensions in $\rep\pi_1$.

Under the Riemann--Hilbert Correspondence this is equivalent to the abelian category $\BunConn_\zeta\PP^1$ consisting of those pairs $(E,\nabla)$ such that $(\Res_{a_i}\nabla-\zeta_{iw_i})\cdots(\Res_{a_i}\nabla-\zeta_{i1})=0$ for all $i$.

We now have the forgetful functor $F\colon\BunConn_\zeta\XX\to\BunConn_\zeta\PP^1$ sending $(\E,\nabla)$ to $(\pi\E,\nabla)$, which in turn admits fully faithful left and right adjoints. Explicitly, the left adjoint $L$ has parabolic structure given by the images
\[ E_{ip} \coloneqq \Ima(\Res_{a_i}\nabla-\zeta_{ip})\cdots(\Res_{a_i}\nabla-\zeta_{i1}) \]
whereas the right adjoint has parabolic structure given by the kernels
\[ E_{ip} \coloneqq \Ker(\Res_{a_i}\nabla-\zeta_{iw_i})\cdots(\Res_{a_i}\nabla-\zeta_{ip+1}). \]
In other words we almost have a recollement, except that the category $\BunConn_\zeta\XX$ is not abelian.

We call a pair $(\E,\nabla)$ \emph{strict} if its parabolic structure is given by the images as above, which is if and only if the counit $LF(\E,\nabla)\to(\E,\nabla)$ is an isomorphism.

We now have the following theorem, translating between the different settings above.

\begin{thm}
We fix the data $\alpha\in\Gamma_\bw$, $\xi=(\xi_{ip})$ and $\zeta=(\zeta_{ip})$ as above, yielding tuples of conjugacy classes $(C'_1,\ldots,C'_k)$ and $(C_1,\ldots,C_k)$ respectively.

(i) There exists a representation $\pi_1\to\GL_n(\C)$ with the image of $g_i$ lying in $\overline C'_i$ for all $i$ if and only if there exists $(\E,\nabla)\in\BunConn_\zeta\XX$ with $\dimv\E=\alpha$.

(ii) There exists a representation $\pi_1\to\GL_n(\C)$ with the image of $g_i$ lying in $C'_i$ for all $i$ if and only if there exists a strict $(\E,\nabla)\in\BunConn_\zeta\XX$ with $\dimv\E=\alpha$.

(iii) There exists an irreducible representation $\pi_1\to\GL_n(\C)$ with the image of $g_i$ lying in $C'_i$ for all $i$ if and only if there exists an irreducible $(\E,\nabla)\in\BunConn_\zeta\XX$.
\end{thm}

\begin{proof}
Write $\alpha=n\alpha_\ast+\sum_{i,p}n_{ip}\alpha_{ip}$. We know that $A_i\in\GL_n(\C)$ lies in $\mathcal C'_i$ if and only if
\[ \rank(A_i-\xi_{ip})\cdots(A_i-\xi_{i1}) = n_{ip}. \]
This is equivalent to the existence of a flag
\[ \C^n = V_{i0} \supseteq V_{i1} \supseteq \cdots \supseteq V_{iw_i} = 0, \quad \dim V_{ip}=n_{ip}, \]
with $(A_i-\xi_{ip})(V_{ip-1})=V_{ip}$ for all $p$.

We can similarly characterise the closure $\overline C'_i$ by saying there exists such a flag with $(A_i-\xi_{ip})(V_{ip-1})\subseteq V_{ip}$ for all $p$.

(i) By the Riemann--Hilbert Correspondence, the existence of a representation with $A_i\in\overline C'_i$ for all $i$ is equivalent to the existence of some $(E,\nabla)\in\BunConn_\xi\PP^1$ with $\Res_{a_i}\nabla\in\overline C_i$ for all $i$. By the above observation, this is equivalent to the existence of a certain flag $(E_{ip})$ in each of the fibres $E_{a_i}$, in which case $\E=(E,E_{ip})$ is a parabolic bundle on $\XX$ having dimension vector $\alpha$, and $\nabla$ is a $\zeta$-connection on $\E$.

(ii) Again, having a representation with $A_i\in C'_i$ for all $i$ is equivalent to finding some $(E,\nabla)\in\BunConn_\xi\PP^1$ with $\Res_{a_i}\nabla\in C_i$ for all $i$, which in turn is equivalent to the existence of a certain flag $(E_{ip})$ in each of the fibres $E_{a_i}$. Then $\E=(E,E_{ip})$ is a parabolic bundle on $\XX$ having dimension vector $\alpha$, and $\nabla$ is a $\zeta$-connection on $\E$ such that the pair $(\E,\nabla)$ is strict.

We observe that $(\E,\nabla)$ is precisely the image of $(E,\nabla)$ under the left adjoint.

(iii) Under the Riemann--Hilbert Correspondence, irreducible objects in $\rep_\xi\pi_1$ correspond to irreducible objects in $\BunConn_\zeta\PP^1$. The claim therefore follows provided both the forgetful functor $F$ and its left adjoint $L$ send irreducibles to irreducibles.

Now, it is clear from our explicit description of $L$ that the unit $\mathrm{id}\to FL$ is an isomorphism, and hence that $L$ is fully faithful. Also, $L$ preserves monomorphisms and the counit $LF\to\mathrm{id}$ is a monomorphism. Finally, as a right adjoint, $F$ necessarily preserves monomorphisms.

Thus, if $(E,\nabla)$ is irreducible, and $(\E,\nabla)=L(E,\nabla)$, then any non-zero subobject $(\E',\nabla')$ of $(\E,\nabla)$ maps to a non-zero subobject of $(E,\nabla)$, so equals $(E,\nabla)$. The counit then yields an inclusion $(\E,\nabla)\subseteq(\E',\nabla')$, which must therefore be an isomorphism, and hence $(\E,\nabla)=L(E,\nabla)$ is irreducible in $\BunConn_\zeta\XX$.

Conversely, if $(\E,\nabla)$ is irreducible, then it must be equal to its subobject $LF(\E,\nabla)$, so $(\E,\nabla)$ is strict. If $(E',\nabla')$ is a non-zero subobject of $(E,\nabla)\coloneqq F(\E,\nabla)$, then it is sent under $L$ to a subobject $L(E',\nabla')\subseteq(\E,\nabla)$, as $L$ preserves monomorphisms, and hence we get equality. Using that the unit is an isomorphism we see that $(E',\nabla')=(E,\nabla)$, and hence that $(E,\nabla)$ is irreducible.
\end{proof}

Let us fix the data $\alpha\in\Gamma_\bw$, $\xi=(\xi_{ip})$, and $\zeta=(\zeta_{ip})$ as above. We recall the definitions
\[ \xi^{[\alpha]} = \prod_{i,p}\xi_{ip}^{n_{ip-1}-n_{ip}} \quad\textrm{and}\quad 
\zeta\ast[\alpha] = \sum_{i,p}\zeta_{ip}(n_{ip-1}-n_{ip}). \]
Then $\exp(-2\pi\I\zeta\ast[\alpha])=\xi^{[\alpha]}$, so $\xi^{[\alpha]}=1$ if and only if $\zeta\ast[\alpha]\in\Z$.

\begin{thm}
\label{t:nosolspecialcases}
There is no irreducible solution to the Deligne--Simpson Problem in the following cases:

(a) The quiver $Q_\bw$ is extended Dynkin, $\xi^{m[\delta]}=1$, and the dimension vector is $\alpha = h m \delta$ with $h\ge 2$.

(b) The quiver $Q_\bw$ is over-extended Dynkin, $\xi^{[\alpha_\infty]}=\xi^{[\delta]}=1$, and the dimension vector is $\alpha = \alpha_\infty + h\delta$ with $h\ge 2$.
\end{thm}

\begin{proof}
By the previous theorem it is equivalent to show that there are no irreducible objects $(\E,\nabla)\in\BunConn_\zeta\XX$ in the following two situations.

(a) The quiver $Q_\bw$ is extended Dynkin, $\zeta\ast[m\delta]\in\Z$, and the dimension vector is $\alpha=hm\delta$ with $h\geq2$.

(b) The quiver $Q_\bw$ is over-extended Dynkin, $\zeta\ast[\alpha_\infty]=0$ and $\zeta\ast[\delta]\in\Z$, and the dimension vector is $\alpha = \alpha_\infty + h\delta$ with $h\ge 2$.

In (b) we have used the fact that $\xi^{[\alpha_\infty]}=1$ if and only if $\xi_{iw_i}=\xi_{iw_i-1}$, which is equivalent to saying $\zeta_{iw_i}=\zeta_{iw_i-1}$, or that $\zeta\ast[\alpha_\infty]=0$.

Now the case (a) is precisely our Proposition~\ref{p:notubularirred}, whereas case (b) is precisely our Theorem~\ref{t:exttubular}.
\end{proof}

\section{Multiplicative preprojective algebras}
\label{s:mainproof}

In Theorem~\ref{t:nosolspecialcases} we have shown that there is no irreducible solution to the Deligne--Simpson Problem in certain special cases. We use reflection functors for multiplicative preprojective algebras to complete the proof of Theorem~\ref{t:dsp}.

Let $Q$ be a quiver with vertex set $I$ and let $q \in (\C^*)^I$. Let $\Lambda^q(Q)$ be the corresponding multiplicative preprojective algebra, see \cite{CBSh}. For $\alpha\in \Gamma_Q$, say $\alpha = \sum_{i\in I} n_i \alpha_i$, we define $q^\alpha = \prod_{i\in I} q_i^{n_i}$.

The connection with the Deligne-Simpson Problem is given by the following lemma. Given conjugacy classes $C_1,\dots,C_k$ in $\GL_n(\C)$ as in the introduction, we choose $k,\bw,\xi$ as there, and obtain a quiver $Q_\bw$, root lattice $\Gamma_\bw$ and an element $\alpha_C \in \Gamma_\bw$. We define $q_C \in (\C^*)^{I_\bw}$ by $q_* = 1/\prod_{i=1}^k \xi_{i1}$ and $q_{ip} = \xi_{ip}/\xi_{i,p+1}$, so that $\xi^{[\alpha]} = 1/q^\alpha$ for all $\alpha\in \Gamma_\bw$. By \cite[Lemma 8.3]{CBSh} we have the following.

\begin{lem}
\label{l:dspmulcorr}
There is an irreducible solution to $A_1\dots A_k = 1$ with matrices $A_i\in C_i$ if and only if there is a simple $\Lambda^{q_C}(Q_\bw)$-module of dimension vector $\alpha_C$.
\end{lem}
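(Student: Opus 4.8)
Since the statement is quoted as \cite[Lemma~8.3]{CBSh}, the intended proof is that reference; I sketch the underlying mechanism, which is worked out in \cite[\S8]{CBSh}. The plan is to use the dictionary, special to star-shaped quivers, between $\Lambda^{q}(Q_\w)$-modules and tuples of matrices. First I would recall that a $\Lambda^{q}(Q_\w)$-module $M$ is a representation of the doubled quiver of $Q_\w$ — so along the $i$th arm one has vector spaces joined by maps toward the central space $V_* = \C^n$ together with companion maps in the opposite direction — subject to the multiplicative relation, which localised at each vertex is a prescribed product of the group-like elements $1+a\bar a$ and $1+\bar a a$. Following \cite{CBSh}, the data along the $i$th arm can be repackaged into a single invertible endomorphism $A_i$ of $V_*$, namely the alternating product of the local group-like factors read off that arm; one checks that $\prod_{j}(A_i-\xi_{ij}1)=0$ and that $\rank\big((A_i-\xi_{i1}1)\cdots(A_i-\xi_{ij}1)\big) = n_{ij}$, the arm component of $\alpha_C$. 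Hence $A_i$ lies in exactly the class $C_i$; the parameter enters here because the choice $q_* = 1/\prod_i\xi_{i1}$ and $q_{ij} = \xi_{ij}/\xi_{i,j+1}$ is precisely what makes the eigenvalues of $A_i$ equal to the $\xi_{ij}$, and the relation at the central vertex $*$ then unwinds to $A_1A_2\cdots A_k = 1$.

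Next I would match irreducibility of $(A_1,\dots,A_k)$ with simplicity of $M$. A subspace $W\subseteq V_*$ stable under all the $A_i$ extends canonically to a subrepresentation of $M$ by intersecting with the arm spaces, and conversely any subrepresentation restricts at $*$ to a common invariant subspace of the $A_i$; the subtle point is that no subrepresentation can be concentrated on the arms away from $*$, which holds because $\alpha_C$ is strict ($n_* \ge n_{i1}\ge\cdots\ge n_{i,w_i-1}\ge 0$) and the arm maps realising $M$ have the maximal rank forced by the numbers $n_{ij}$. Thus $M$ is simple if and only if $(A_1,\dots,A_k)$ has no nontrivial common invariant subspace. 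For the two implications one then argues: from an irreducible solution $(A_i)$ with $A_i\in C_i$, the ranks of the partial products are the $n_{ij}$ by the definition of $C_i$, so the arm maps can be reconstructed (canonically, as in \cite{CBSh}) to give a simple $\Lambda^{q_C}(Q_\w)$-module of dimension vector $\alpha_C$; and a simple such module yields an irreducible solution by the above.

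I expect the main obstacle to be the bookkeeping in the first paragraph: checking that the group-like relations with parameter $q_C$ yield matrices in exactly the classes $C_i$, with the right eigenvalues and the right partial ranks, and that the central relation is $A_1\cdots A_k = 1$ on the nose rather than some twist of it. This is exactly the computation performed in \cite[\S8]{CBSh}, and here we simply cite it.
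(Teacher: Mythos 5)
Your proposal takes the same route as the paper, which simply cites \cite[Lemma 8.3]{CBSh} without further argument; your additional sketch of the mechanism (repackaging the arm data of a $\Lambda^{q_C}(Q_\w)$-module into matrices $A_i$ with the prescribed eigenvalues and partial ranks, and matching simplicity with irreducibility) is a faithful outline of what that reference does. This is correct and consistent with the paper.
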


In the rest of this section $Q$ is an arbitrary quiver with vertex set $I$ and $q\in(\C^*)^I$. For compatibility with earlier work, we now change notation: the root lattice is $\Z^I$, the simple roots are denoted $\epsilon_i$, and the components of $\alpha\in\Z^I$ are denoted $\alpha_i\in\Z$. Thus, for example, $q^\alpha = \prod_{i\in I} q^{\alpha_i}$. We write $\langle -,-\rangle$ for the Euler form as in the introduction, but now it is a bilinear form on $\Z^I$, its symmetrization is $(-,-)$ and $p(\alpha) = 1 -\langle \alpha,\alpha\rangle$.

If $v$ is a loopfree vertex in $I$, there is a reflection $s_v :\Z^I\to \Z^I$ given by $s_v(\alpha) = \alpha - (\alpha,\epsilon_v)\epsilon_v$. By \cite{CBSh} there is a multiplicative dual to $s_v$ given by 
\[
u_v : (\C^*)^I\to (\C^*)^I, 
\quad
u_v(q)_w = q_v^{-(\epsilon_v,\epsilon_w)} q_w,
\]
and a reflection functor $F_q$ giving an equivalence from the category of modules for $\Lambda^q(Q)$ to modules for $\Lambda^{u_v(q)}(Q)$, and acting as the reflection $s_v$ on dimension vectors.

Consider pairs $[q,\alpha]$ with $q\in (K^*)^I$ and $\alpha\in\Z^I$.
The reflection at a loopfree vertex $v$ is \emph{admissible} for the pair $[q,\alpha]$ if $q_v\neq 1$. 
The equivalence relation $\sim$ on the set of pairs is generated by 
$[q,\alpha]\sim[u_v(q),s_v(\alpha)]$ whenever the reflection at $v\in I$ is admissible for $[q,\alpha]$.
This is analogous to \cite[\S2]{CBmm}, except that we use multiplicative reflections.
If $[q,\alpha]\sim[q',\alpha']$, then the reflection functors ensure that there is a simple $\Lambda^{q}(Q)$-module of dimension $\alpha$ if and only if there is a simple $\Lambda^{q'}(Q)$-module of dimension $\alpha'$.

We are going to adapt \cite[\S\S5,7,8]{CBmm} to this situation, so we use bars to distinguish from the notation there.
Let $\bar R^+_q$ be the set of positive roots $\alpha$ with $q^\alpha=1$.
Let $\N \bar R^+_q$ be the set of sums of elements of $\bar R^+_q$.
Let $\bar\Sigma_q$ be the set of elements $\alpha\in \bar R^+_q$
such that there is no decomposition $\alpha = \beta+\gamma+\cdots$ with 
$p(\alpha)>p(\beta)+p(\gamma)+\cdots$ and $\beta,\gamma,\ldots\in \bar R^+_q$.

\begin{thm}
If $\alpha\in\N^I$, then $\alpha\in\bar\Sigma_q$ if and only if $0\neq \alpha\in \N \bar R^+_q$
and $(\beta,\alpha-\beta)\le -2$ whenever $\beta,\alpha-\beta$ are non-zero and in $\N \bar R^+_q$.
\end{thm}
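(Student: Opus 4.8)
The plan is to prove the two directions separately, treating the characterization of $\bar\Sigma_q$ as an analogue of the Crawley-Boevey description of $\Sigma_\lambda$ for deformed preprojective algebras (as in the additive theory used in \cite{CBmm}).

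First I would handle the easy implication: if $\alpha\in\bar\Sigma_q$, then by definition $\alpha\in\bar R^+_q$ (so in particular $0\neq\alpha\in\N\bar R^+_q$), and I must show $(\beta,\alpha-\beta)\le -2$ whenever $\beta,\alpha-\beta$ are nonzero elements of $\N\bar R^+_q$. Writing $\gamma=\alpha-\beta$, I would use the standard identity $p(\beta+\gamma)=p(\beta)+p(\gamma)+1-\langle\beta,\gamma\rangle-\langle\gamma,\beta\rangle = p(\beta)+p(\gamma)+1-(\beta,\gamma)$ together with $q(\alpha)=1-p(\alpha)$. If $(\beta,\gamma)\ge -1$ then $p(\alpha)\le p(\beta)+p(\gamma)$, and after refining $\beta$ and $\gamma$ into sums of elements of $\bar R^+_q$ (which only decreases the total $p$, since $p$ of a single root is $\ge 0$ and subadditivity fails in the right direction — more precisely $p(\beta_1+\beta_2)\ge p(\beta_1)+p(\beta_2)$ when the roots are ``positively linked'', so one has to be slightly careful), one contradicts $\alpha\in\bar\Sigma_q$. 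The combinatorial lemma I really need here is: if $\alpha\in\N\bar R^+_q$ admits no decomposition into two nonzero parts of $\N\bar R^+_q$ with $p$ non-decreasing, then it admits no decomposition into any number of parts with $p$ strictly increasing; this is a routine induction on the number of parts, grouping parts two at a time.

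For the converse — the substantive direction — suppose $0\neq\alpha\in\N\bar R^+_q$ and $(\beta,\alpha-\beta)\le -2$ for all valid splittings, and suppose for contradiction that $\alpha\notin\bar\Sigma_q$, i.e.\ there is a decomposition $\alpha=\beta_1+\dots+\beta_t$ with all $\beta_\ell\in\bar R^+_q$ and $p(\alpha)>\sum p(\beta_\ell)$. I would argue by induction on the height $\sum_i\alpha_i$ (or on $t$), reducing to the case $t=2$: regroup the decomposition as $\beta=\beta_1+\dots+\beta_s$ and $\gamma=\beta_{s+1}+\dots+\beta_t$, so $\alpha=\beta+\gamma$ with $\beta,\gamma\in\N\bar R^+_q$ nonzero. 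The key point is that one can choose the regrouping (or first pass to a minimal counterexample via reflection functors $u_v(q)$, using that reflections preserve $\bar R^+_q$, $\N\bar R^+_q$, the form $(-,-)$, and the values of $p$) so that $p(\alpha)>p(\beta)+p(\gamma)$ still holds for the two-part decomposition; then $(\beta,\gamma)=p(\beta)+p(\gamma)+1-p(\alpha)\ge 0 > -2$, contradicting the hypothesis. Making this regrouping argument actually work is the main obstacle: a naive pairing need not preserve strictness of the $p$-inequality, so one typically invokes a more careful analysis — either an extremal choice of the $\beta_\ell$ (e.g.\ choose a decomposition into the largest possible number of summands, or one minimizing $\sum p(\beta_\ell)$), or the Weyl-group/reflection reduction to bring $\alpha$ into a fundamental-domain-like position where $\alpha$ itself is a positive root or a sum of ``orthogonal'' imaginary roots and the structure of $\bar R^+_q$ is transparent. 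I expect this step to mirror \cite[\S]{CBmm} closely, adapting the additive-preprojective combinatorics to the multiplicative side via the dictionary $q^\alpha=1$ in place of $\lambda\cdot\alpha=0$; the reflection functors of \cite{CBSh} supply exactly the equivariance needed, and the only genuinely new bookkeeping is tracking admissibility ($q_i\neq 1$) of the reflections used.
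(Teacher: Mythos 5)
Your overall strategy---transport the proof of \cite[Theorem 5.6]{CBmm} to the multiplicative setting, replacing $\lambda\cdot\alpha=0$ by $q^\alpha=1$---is exactly what the paper does (its proof is a one-line citation of that theorem and of \cite[Lemmas 5.1--5.5]{CBmm}), so there is no disagreement about the route; the problems are in the details. First, the basic identity is $p(\beta+\gamma)=p(\beta)+p(\gamma)-1-(\beta,\gamma)$, not $+1-(\beta,\gamma)$; consequently merging two summands fails to decrease the total value of $p$ precisely when $(\beta_1,\beta_2)\le-1$, i.e.\ when they are \emph{negatively} linked, the opposite of what you assert. The lemma actually needed for the direction $\alpha\in\bar\Sigma_q\Rightarrow(\beta,\alpha-\beta)\le-2$ is that every nonzero $\beta\in\N\bar R^+_q$ admits a decomposition into elements of $\bar R^+_q$ with $p(\beta)\le\sum_i p(\beta_i)$: start from any decomposition into elements of $\bar R^+_q$ and repeatedly merge pairs with $(\beta_i,\beta_j)<0$ (their sum is again a positive root and lies in $\bar R^+_q$ since $q^{\beta_i+\beta_j}=q^{\beta_i}q^{\beta_j}=1$) until all pairs satisfy $(\beta_i,\beta_j)\ge0$, whereupon $p(\beta)=\sum_i p(\beta_i)+(1-r)-\sum_{i<j}(\beta_i,\beta_j)\le\sum_i p(\beta_i)$. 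Your ``routine induction grouping parts two at a time'' does not close, because after one grouping the two-part hypothesis is no longer available for the sub-sums.

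Second, the converse direction as written produces no contradiction: from $p(\alpha)>p(\beta)+p(\gamma)$ one gets $(\beta,\gamma)=p(\beta)+p(\gamma)-1-p(\alpha)\le-2$, which is \emph{consistent} with the hypothesis (and your claimed ``$\ge 0$'' follows from neither sign convention). The culprit is the inequality in your negation of $\alpha\in\bar\Sigma_q$: for this theorem to be the analogue of \cite[Theorem 5.6]{CBmm} and to feed into Theorems~\ref{t:mainthmformultpreproj} and~\ref{t:dsp}, membership in $\bar\Sigma_q$ must mean $p(\alpha)>\sum_\ell p(\beta_\ell)$ for \emph{every} nontrivial decomposition into elements of $\bar R^+_q$, so its failure supplies a decomposition with $p(\alpha)\le\sum_\ell p(\beta_\ell)$. (The printed definition of $\bar\Sigma_q$ appears to have this inequality reversed, which is presumably what misled you.) With the correct negation no delicate regrouping is needed: summing the hypothesis $(\beta_\ell,\alpha-\beta_\ell)\le-2$ over $\ell$ gives $2\sum_{\ell<\ell'}(\beta_\ell,\beta_{\ell'})\le-2t$, whence $p(\alpha)-\sum_\ell p(\beta_\ell)=(1-t)-\sum_{\ell<\ell'}(\beta_\ell,\beta_{\ell'})\ge1>0$, so no such decomposition exists; this dissolves the ``main obstacle'' you describe. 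Finally, this direction must also show that $\alpha$ is actually a positive root (the hypothesis gives only $\alpha\in\N\bar R^+_q$); take a decomposition into elements of $\bar R^+_q$ with as few parts as possible and observe that with two or more parts the condition $(\beta_1,\alpha-\beta_1)\le-2$ forces some pair to be negatively linked and hence mergeable. Your proposal omits this step.
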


\begin{proof}
This is analogous to \cite[Theorem 5.6]{CBmm}, using the obvious modifications of \cite[Lemmas 5.1--5.5]{CBmm}.
\end{proof}

Let $\bar F_q$ be the set of $\alpha\in \bar R^+_q$ such that $(\alpha',\epsilon_i)\le 0$ for any $[q',\alpha']\sim [q,\alpha]$ and any vertex $i$ with $q'_i=1$.

\begin{lem}
\label{l:simplecoodorfq}
If there is a simple $\Lambda^q$-module of dimension $\alpha$, then either $[q,\alpha]$
is equivalent to a pair $[q',\alpha']$ with $\alpha'$ the coordinate vector of a loopfree vertex, or $\alpha\in \bar F_q$.
\end{lem}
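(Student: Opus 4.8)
The plan is to argue by contradiction via reflection functors, using the fact that reflections preserve simplicity of modules (as established in \cite{CBSh}) and the combinatorial machinery just recalled for $\bar R^+_q$, $\bar\Sigma_q$ and $\bar F_q$. Suppose there is a simple $\Lambda^q$-module of dimension $\alpha$, and suppose $\alpha\notin \bar F_q$. By definition of $\bar F_q$, there is a pair $[q',\alpha']\sim[q,\alpha]$ and a vertex $i$ with $q'_i=1$ and $(\alpha',\epsilon_i)>0$. Applying the admissible reflections witnessing the equivalence $[q,\alpha]\sim[q',\alpha']$ to the simple module, we obtain a simple $\Lambda^{q'}$-module of dimension $\alpha'$; so without loss of generality we may assume $[q',\alpha']=[q,\alpha]$, i.e.\ there is a vertex $i$ with $q_i=1$ and $(\alpha,\epsilon_i)>0$.

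The key observation is then that at a vertex $i$ with $q_i=1$ the reflection is \emph{not} admissible, so we cannot reflect there; instead we must use this vertex directly. First I would treat the degenerate possibility that $\alpha=m\epsilon_i$ for some $m\ge 1$: if $\alpha_i>0$ and $\alpha_j=0$ for $j\ne i$, then since $(\alpha,\epsilon_i)=2\alpha_i>0$ forces $i$ to be loopfree (otherwise $(\epsilon_i,\epsilon_i)\le 0$), and a simple module of dimension $m\epsilon_i$ at a loopfree vertex with $q_i=1$ forces $m=1$; this is exactly the first alternative in the statement, so that case is done. Hence I may assume $\alpha$ is not supported only at $i$, i.e.\ $\alpha-\epsilon_i$ has a nonzero component away from $i$ (in particular, if $i$ is loopfree, $\epsilon_i$ is a real root and $\alpha\ne\epsilon_i$).

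Now I would extract a contradiction from the hypothesis $(\alpha,\epsilon_i)>0$ together with $q_i=1$. Since $\epsilon_i\in \bar R^+_q$ (as $q^{\epsilon_i}=q_i=1$ and $\epsilon_i$ is a positive root, using $\alpha_i>0$ so $i$ is loopfree and $\epsilon_i$ is a real root), and since $\alpha$ is the dimension vector of a simple module we know $\alpha\in\bar\Sigma_q$ (simple modules have dimension vectors in $\bar\Sigma_q$, by the multiplicative analogue of the corresponding result for deformed preprojective algebras, cf.\ \cite{CBSh}). The plan is: from $(\alpha,\epsilon_i)>0$ and $(\epsilon_i,\epsilon_i)=2$ we get $(\alpha-\epsilon_i,\epsilon_i)=(\alpha,\epsilon_i)-2\ge -1$, so $(\alpha-\epsilon_i,\epsilon_i)\ge -1 > -2$; if moreover $\alpha-\epsilon_i$ lies in $\N\bar R^+_q$ this directly contradicts the criterion in the Theorem above characterising membership in $\bar\Sigma_q$ via $(\beta,\alpha-\beta)\le -2$. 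So the real content is to show $\alpha-\epsilon_i\in \N\bar R^+_q$ (and nonzero), which is where the argument needs care: one shows $\alpha-\epsilon_i$ is still a nonnegative combination of positive roots $\beta$ with $q^\beta=1$, using that removing a copy of the root $\epsilon_i$ (with $q^{\epsilon_i}=1$) from a $\bar R^+_q$-decomposition of $\alpha$ either leaves a valid decomposition or, if $\epsilon_i$ was not one of the summands, that the inequality $(\alpha,\epsilon_i)>0$ itself forces a summand to have positive $\epsilon_i$-pairing, which one then splits off.

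The main obstacle I anticipate is precisely this last step: verifying that $\alpha-\epsilon_i\in\N\bar R^+_q$ (rather than merely $\alpha\in\N\bar R^+_q$). The clean way is to pick a decomposition $\alpha=\beta_1+\dots+\beta_r$ with $\beta_\ell\in\bar R^+_q$ realising $\alpha\in\bar\Sigma_q$, use $(\alpha,\epsilon_i)>0$ to find some $\ell$ with $(\beta_\ell,\epsilon_i)>0$, and then argue that $\beta_\ell-\epsilon_i$ is again in $\bar R^+_q\cup\{0\}$ — this is a standard fact about root systems when $\epsilon_i$ is a real root and $(\beta_\ell,\epsilon_i)>0$, giving $\beta_\ell-\epsilon_i\in\N\bar R^+_q$ (it is either $0$ or a positive root, and $q^{\beta_\ell-\epsilon_i}=q^{\beta_\ell}/q_i=1$). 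Replacing $\beta_\ell$ by $\beta_\ell-\epsilon_i$ in the decomposition exhibits $\alpha-\epsilon_i\in\N\bar R^+_q$, and it is nonzero because $\alpha$ is not supported only at $i$. This contradicts $\alpha\in\bar\Sigma_q$ via the Theorem, completing the proof. I would double-check that the reflection-functor bookkeeping in the multiplicative setting of \cite{CBSh} genuinely carries simple modules to simple modules across the equivalence $\sim$, since that is what licenses the reduction to $[q',\alpha']=[q,\alpha]$ at the outset.
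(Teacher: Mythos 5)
Your reduction via reflection functors is the right opening move and matches the paper's intended argument: if $\alpha\notin\bar F_q$ you pass to an equivalent pair with $q_i=1$ and $(\alpha,\epsilon_i)>0$, note that such a vertex $i$ is automatically loopfree (a loop at $i$ would force $(\alpha,\epsilon_i)\le 0$), transport the simple module across the admissible reflections, and dispose of the case where $\alpha$ is supported at $i$ alone. The problem is the step you use to finish: you invoke ``simple modules have dimension vectors in $\bar\Sigma_q$'' and then contradict the $(\beta,\alpha-\beta)\le -2$ criterion. That statement is Theorem~\ref{t:mainthmformultpreproj}, the main result of this section, and Lemma~\ref{l:simplecoodorfq} is one of the ingredients in its proof. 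The reference \cite{CBSh} proves only the converse direction (membership in $\bar\Sigma_q$ implies existence of a simple module), so there is no external source for your claim, and using it here makes the argument circular. Everything downstream of that claim --- the choice of a decomposition of $\alpha$ into elements of $\bar R^+_q$, splitting off a summand with positive pairing against $\epsilon_i$, and the root-system manipulation --- collapses with it, since you do not even know at this stage that $\alpha\in\N\bar R^+_q$.

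What the paper does instead is replace this global input by a purely local one: the multiplicative analogue of \cite[Lemma 7.2]{CBmm}, namely \cite[Lemma 5.1]{CBSh}. That lemma says that if $i$ is a loopfree vertex with $q_i=1$ and there is a simple $\Lambda^q$-module of dimension $\alpha\neq\epsilon_i$, then $(\epsilon_i,\alpha)\le 0$. It is proved directly from the defining relation of $\Lambda^q$ at the vertex $i$ (when $q_i=1$ the relation forces the composite of the ``in'' and ``out'' maps at $i$ to vanish, and simplicity forces one to be surjective and the other injective, giving the dimension inequality), with no reference to $\bar R^+_q$ or $\bar\Sigma_q$. Applied in contrapositive at your vertex $i$ with $(\alpha',\epsilon_i)>0$ and $q'_i=1$, it yields $\alpha'=\epsilon_i$ at a loopfree vertex, which is exactly the first alternative of the lemma. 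If you substitute that local lemma for your appeal to $\bar\Sigma_q$, your write-up becomes the paper's proof.
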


\begin{proof}
This is obtained by modifying \cite[Lemma 7.4]{CBmm}, using the obvious modifications of \cite[Lemma 7.1, 7.3]{CBmm}, and replacing \cite[Lemma 7.2]{CBmm} by \cite[Lemma 5.1]{CBSh}.
\end{proof}

\begin{thm}
\label{t:reducetocases}
If $[q,\alpha]$ is a pair with $\alpha\in \bar F_q \setminus \bar\Sigma_q$, then after first passing to an 
equivalent pair, and then passing to the support quiver of $\alpha$ and the corresponding restrictions of $q$ and $\alpha$,
one of the following cases holds:

(I) $Q$ is extended Dynkin with minimal positive imaginary root $\delta$, 
the dimension vector is $\alpha = h m \delta$ with $m\ge 1$ and $h\ge 2$ 
and $q^{\delta}$ a primitive $m$th root of unity.

(II) $Q$ is a disjoint union of two parts connected by one arrow between vertices $i$, $j$, with $\alpha_i = \alpha_j=1$,
and if $\alpha = \beta+\gamma$ where $\beta$ and $\gamma$ are the restriction of $\alpha$ to each
side of the edge joining $i$ and $j$, then $q^\beta = q^\gamma = 1$.

(III) $Q$ is a disjoint union of two parts connected by one arrow between vertices $j$, $k$ with $\alpha_j=1$, 
the part containing $k$ is extended Dynkin with $k$ as an extending vertex and minimal positive imaginary root $\delta$, 
the restriction of $\alpha$ to this part is $h\delta$ with $h\ge 2$ and $q^{\delta}=1$.
\end{thm}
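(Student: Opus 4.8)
The plan is to transport to the multiplicative setting the reduction carried out for deformed preprojective algebras in \cite{CBmm} (the proof of the corresponding classification of fundamental-region dimension vectors with no simple module), replacing the ordinary reflections and reflection functors there by those of \cite{CBSh}. I indicate the main steps.

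First I would pass to a convenient representative. Since the conclusions~(I)--(III) concern the support quiver of a (possibly reflected) $\alpha$, and since passing to a support quiver changes none of $p$, $\bar R^+_q$, $\bar\Sigma_q$ or $\bar F_q$ for the vectors involved, I may work with support quivers throughout and so assume $\alpha$ sincere. Among the pairs equivalent to $[q,\alpha]$ choose one, still written $[q,\alpha]$, with $\sum_i\alpha_i$ minimal. As $\bar F_q$ is stable under the equivalence relation, this pair still has $\alpha\in\bar F_q$, so $(\alpha,\epsilon_i)\le0$ at every vertex $i$ with $q_i=1$. At a loopfree vertex $i$ with $q_i\neq1$ the reflection at $i$ is admissible, and were $(\alpha,\epsilon_i)>0$ then $s_i(\alpha)$ would be a non-negative root with strictly smaller coordinate sum, contradicting minimality; and at any looped vertex $(\alpha,\epsilon_i)\le0$ automatically. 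Hence $(\alpha,\epsilon_i)\le0$ for all $i$, so $\alpha$ lies in Kac's fundamental region and is therefore a positive imaginary root with $q(\alpha)\le0$ (and its support is not a single loopfree vertex, which would anyway be incompatible with $\alpha\in\bar F_q\setminus\bar\Sigma_q$).

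Next, since $\alpha$ is a nonzero element of $\N\bar R^+_q$ not lying in $\bar\Sigma_q$, there is a nontrivial decomposition $\alpha=\beta^{(1)}+\dots+\beta^{(r)}$ into elements of $\bar R^+_q$ with $\sum_t p(\beta^{(t)})\ge p(\alpha)$; I would fix one with $\sum_t p(\beta^{(t)})$ maximal and then study how the supports of the $\beta^{(t)}$ sit inside the fundamental-region vector $\alpha$, comparing $\sum_t p(\beta^{(t)})$ with $p(\alpha)$ and using the fundamental-region inequalities, exactly as in \cite{CBmm}. Two regimes occur. If $q(\alpha)=0$, then $(\alpha,\epsilon_i)=0$ for all $i$, which forces the support quiver to be extended Dynkin and $\alpha$ to be a positive multiple $n\kdelta$ of its null root; tracing through which decompositions of $n\kdelta$ into elements of $\bar R^+_q$ can achieve $\sum p\ge1$ then shows $n=hm$ with $h\ge2$ and $q^{\kdelta}$ a primitive $m$th root of unity, i.e.\ case~(I). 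If $q(\alpha)<0$, the support quiver is indefinite, and — since $\alpha\notin\bar\Sigma_q$ — the same combinatorial analysis of the overlap pattern of the $\beta^{(t)}$, after possibly applying further admissible reflections, forces the quiver to decompose as two subquivers joined by a single arrow at vertices where $\alpha$ has coefficient $1$, with $q$ multiplicative over the two pieces; according to whether each extreme piece is again of fundamental-region type (hence extended Dynkin with the joining vertex extending and the restriction of $\alpha$ to it equal to $h\kdelta$ with $h\ge2$ and $q^{\kdelta}=1$) or collapses to a single loopfree vertex of coefficient $1$, one obtains case~(III) or case~(II).

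The main obstacle is this rigidity step in the $q(\alpha)<0$ regime: showing the overlap pattern of the $\beta^{(t)}$ is constrained enough that, after suitable reflections, the quiver acquires a single separating arrow with the stated coefficient and $q$-conditions at the joint. This is where \cite{CBmm} does most of its work, and the extra ingredient here is bookkeeping the admissible vertices: at a loopfree vertex with $q_i\neq1$ there is a reflection unavailable in the additive theory, and one must check both that these reflections do not obstruct reaching one of the three normal forms and, where needed, that they can be applied (as in \cite{CBSh}) to simplify. Once the quiver is in normal form, the precise numerical conditions — the primitive-$m$th-root condition in~(I) and $q^{\kdelta}=1$ (respectively $q^\beta=q^\gamma=1$) in~(III) and~(II) — follow by a direct computation of $q^{\beta^{(t)}}$ along the decomposition, using that each $\beta^{(t)}\in\bar R^+_q$.
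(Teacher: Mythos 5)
Your proposal follows essentially the same route as the paper's proof: reduce by admissible reflections to the fundamental region (possible because $\alpha\in\bar F_q$), split on $q(\alpha)=0$ versus $q(\alpha)<0$, obtain case~(I) from the primitivity of $q^{\kdelta}$ exactly as you describe, and defer the indefinite case to a multiplicative adaptation of the combinatorics of \cite[\S 8]{CBmm}. The only substantive difference is organizational: where you speak of analysing the overlap pattern of a $p$-maximal decomposition, the paper normalizes by minimal support (rather than staying in the fundamental region) and runs the $q(\alpha)<0$ case through explicit notions of $(-1)$-vectors and divisors, i.e.\ the analogues of \cite[Lemmas 8.3, 8.4, 8.11, 8.14, 8.15]{CBmm} --- which is precisely the rigidity step you correctly identify as the main remaining work.
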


\begin{proof}
This is an analogue of \cite[Theorem 8.1]{CBmm}, and is obtained by modifying the proof of that result, including straightforward analogues of \cite[Lemmas 8.2--8.15]{CBmm} (8.8 and 8.13 need no change). 
This result has been observed independently, and generalized, by Schedler and Tirelli \cite[Theorem 6.16]{ST}.
\end{proof}

\begin{thm}
\label{t:mainthmformultpreproj}
If there is a simple $\Lambda^q(Q)$-module of dimension $\alpha$, then $\alpha\in\bar\Sigma_q$.
\end{thm}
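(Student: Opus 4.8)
The plan is to argue by contradiction: suppose there is a simple $\Lambda^q(Q)$-module $M$ of dimension vector $\alpha$ but $\alpha\notin\bar\Sigma_q$. The first step is to reduce to the case $\alpha\in\bar F_q$. By Lemma~\ref{l:simplecoodorfq}, either $[q,\alpha]$ is equivalent to a pair $[q',\alpha']$ with $\alpha'$ a coordinate vector at a loopfree vertex, or $\alpha\in\bar F_q$. In the first case $\alpha'$ is a real root, hence automatically lies in $\bar\Sigma_{q'}$, and since the reflection functors of \cite{CBSh} preserve simplicity and the membership relations are invariant under the equivalence $\sim$ (by the analogue of the corresponding statements in \cite{CBmm}), we get $\alpha\in\bar\Sigma_q$, a contradiction. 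So we may assume $\alpha\in\bar F_q$, and since $\alpha\notin\bar\Sigma_q$ we have $\alpha\in\bar F_q\setminus\bar\Sigma_q$.

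Next I would invoke Theorem~\ref{t:reducetocases}: after passing to an equivalent pair and then to the support quiver of $\alpha$ with the restricted data, one of the three cases (I), (II), (III) holds. Since simplicity of the module and the existence of a simple module are preserved under reflection functors and under restriction to the support (a simple module is supported on a connected full subquiver, and restriction gives a simple module for the restricted multiplicative preprojective algebra), it suffices to derive a contradiction — i.e., show no simple $\Lambda^q(Q)$-module of dimension $\alpha$ exists — in each of these three cases. The bridge to the earlier analytic work is Lemma~\ref{l:dspmulcorr}: a simple $\Lambda^{q_C}(Q_\w)$-module of dimension $\alpha_C$ corresponds to an irreducible solution of the Deligne-Simpson problem for the associated conjugacy classes, where the relation $\xi^{[\alpha]} = 1/q^\alpha$ translates the multiplicative conditions on $q$ into the eigenvalue conditions on $\xi$.

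Then I would match up the cases. Case (I) of Theorem~\ref{t:reducetocases} — $Q$ extended Dynkin, $\alpha = hm\kdelta$ with $h\ge 2$, and $q^{\kdelta}$ a primitive $m$th root of unity — is exactly case (a) of Theorem~\ref{t:nosolspecialcases} (noting $\xi^{[\kdelta]} = 1/q^{\kdelta}$ is a primitive $m$th root of unity iff $q^{\kdelta}$ is). Case (III) — a quiver obtained from an extended Dynkin quiver by joining an extra vertex $j$ with $\alpha_j = 1$ to an extending vertex, restriction of $\alpha$ to the extended Dynkin part being $h\kdelta$ with $h\ge 2$ and $q^{\kdelta} = 1$ — is case (b) of Theorem~\ref{t:nosolspecialcases}, with $\alpha_\infty$ the coordinate vector at $j$ and $\xi^{[\alpha_\infty]} = 1/q^{\alpha_\infty}$; here $\alpha_j = 1$ forces $q^{\alpha_\infty} = q_j$, and one checks the hypothesis $\xi^{[\alpha_\infty]} = 1$ from the structure (the vertex $j$ is loopfree with $(\alpha,\epsilon_j)\le 0$, which together with $\alpha_j = 1$ pins down $q_j$). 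In both cases Theorem~\ref{t:nosolspecialcases} says there is no irreducible DSP solution, hence by Lemma~\ref{l:dspmulcorr} no simple module, the desired contradiction. Case (II) — two parts joined by a single arrow $i\to j$ with $\alpha_i = \alpha_j = 1$ and $q^\beta = q^\gamma = 1$ for the two restrictions — must be handled separately: here a simple module would have to glue a simple module on each side via a single arrow at one-dimensional spaces, and one shows directly that such a gluing is never simple (or reduces to $\alpha = \beta + \gamma$ with $(\beta,\gamma) = -1 > -2$ violating the $\bar\Sigma_q$ criterion of the theorem preceding Lemma~\ref{l:simplecoodorfq}, consistent with $\alpha\notin\bar\Sigma_q$ — but we need the nonexistence of the simple module, which follows because a $(-1)$-vector decomposition with both summands in $\N\bar R^+_q$ obstructs simplicity by the standard deformed-preprojective-style argument, here its multiplicative analogue).

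The main obstacle I anticipate is Case (II): unlike (I) and (III) it is not covered by the analytic Theorem~\ref{t:nosolspecialcases}, so one needs a genuinely algebraic non-simplicity argument for multiplicative preprojective algebras glued along a single arrow at one-dimensional vertices. This should follow the template of \cite{CBmm} and \cite{CBHu} — the existence of a divisor with critical vertex forces a canonical sub- or quotient module — but one must verify that the multiplicative reflection functors and the group-valued relations behave well enough for the argument to go through; this is where one imports the analogues of the relevant lemmas of \cite{CBSh}. A secondary point requiring care is the bookkeeping in Case (III): translating the combinatorial normalization produced by Theorem~\ref{t:reducetocases} (that $j$ is joined to an extending vertex and $\alpha_j = 1$) into the precise numerical hypotheses $\xi^{[\alpha_\infty]} = 1$ and $\xi^{[\kdelta]} = 1$ of Theorem~\ref{t:nosolspecialcases}~(b), using $\xi^{[\alpha]} = 1/q^\alpha$ and the constraint $q^{\kdelta} = 1$ together with $\alpha_j = 1$.
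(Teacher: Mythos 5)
Your overall architecture matches the paper's proof exactly: reduce via Lemma~\ref{l:simplecoodorfq} and the reflection functors to $\alpha\in\bar F_q$, invoke Theorem~\ref{t:reducetocases}, and kill cases (I) and (III) by transporting them through Lemma~\ref{l:dspmulcorr} to Theorem~\ref{t:nosolspecialcases}(a) and (b). However, there is one genuine gap in your treatment of case (III). Theorem~\ref{t:nosolspecialcases} lives on the DSP side and therefore only applies when the quiver in question is of the form $Q_{\w}$, i.e.\ star-shaped. In case (III) of Theorem~\ref{t:reducetocases} the extended Dynkin part can be of type $\tilde{A}_n$ (a cycle), in which case adjoining the vertex $j$ does not produce a star and Lemma~\ref{l:dspmulcorr} gives you nothing. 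The paper handles this subcase by a separate, genuinely different argument: it rewrites the multiplicative relations around the cycle, passes to an additive deformed preprojective algebra $\Pi^\lambda(Q'')$ for an explicit $\lambda$ with $\lambda\cdot\delta=0$, shows by a trace computation that $a^\circ a=0$ so that the data really does satisfy the additive relations, and then invokes \cite[Theorem 9.1]{CBmm} or \cite[Theorem 1.1]{CBHu}. You would need to supply this (or some substitute) for your proof to close.

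Two smaller points. In case (II) your instinct is right that a direct algebraic argument is needed, but the hedged discussion of $(-1)$-vectors is not the mechanism: the actual argument is that taking determinants of the multiplicative relations over the vertices of one part forces $\det(1+aa^*)=q^\beta=1$ for the connecting arrow $a$, and since $\alpha_i=\alpha_j=1$ both $a$ and $a^*$ are $1\times1$ matrices with product zero, so one of them vanishes and restriction to one part is a proper subrepresentation. You should state this rather than appeal to a ``standard deformed-preprojective-style argument.'' Finally, in case (III) your justification of $\xi^{[\alpha_\infty]}=1$ via ``$(\alpha,\epsilon_j)\le 0$ pins down $q_j$'' is not the right reason; the correct (and easier) derivation is $q^{\alpha_j}=q^{\alpha}\cdot(q^{\kdelta})^{-h}=1$ since $\alpha\in\bar R^+_q$ and $q^{\kdelta}=1$.
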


\begin{proof}
If $[q,\alpha]\sim[q',\alpha']$, then by the multiplicative analogue of \cite[Lemma 5.2(3)]{CBmm}, we have $\alpha\in\bar\Sigma_q$ if and only if $\alpha'\in\bar\Sigma_{q'}$. Thus by 
Lemma~\ref{l:simplecoodorfq}, if there is a simple $\Lambda^q(Q)$-module of dimension $\alpha$, we may assume either that $\alpha$ is the coordinate vector of a loopfree vertex, or else $\alpha\in\bar F_q$. In the first case we trivially have $\alpha\in\bar\Sigma_q$, so it is enough to consider the case $\alpha\in\bar F_q$, and then it suffices to show that there is no simple module in the three cases (I), (II), (III) of Theorem ~\ref{t:reducetocases}.

In case (I) there is no simple by Theorem~\ref{t:nosolspecialcases}(a).

In case (II), the conditions on $q$ ensure that the arrows from $i$ to $j$ and from $j$ to $i$ 
are represented in the simple module by $1\times 1$ matrices with product zero. Thus one of the matrices is zero, and hence the restriction of the representation to one of the parts of the quiver will be a non-trivial subrepresentation, contradicting simplicity.

In case (III), the conditions ensure that the representation restricts to give a representation of a multiplicative preprojective algebra for the full subquiver $Q''$ of $Q$ given by the extended Dynkin subquiver together with vertex $j$, and moreover this representation must also be simple. If the extended Dynkin quiver is star-shaped, this is impossible by Theorem~\ref{t:nosolspecialcases}(b). Suppose therefore that the extended Dynkin quiver is of type $\tilde{A}_n$. Since, up to isomorphism, the multiplicative preprojective algebra doesn't depend on the orientation of the quiver, we may assume that $Q''$ has shape
\[ \begin{tikzcd}
&&[-20pt] 1 \arrow[dl,"b_0"] & 2 \arrow[l,"b_1"]\\
j \arrow[r,"a"] & k \arrow[dr,"b_r"] &&&[-20pt] 3 \arrow[ul,"b_2"]\\
&& r \arrow[r] & {} \arrow[ur,dotted]
\end{tikzcd}\]
for some $r\ge 0$. Since the multiplicative preprojective algebra also doesn't depend on the choice of ordering of the arrows, the relations at the vertices $k$ and $1,\dots,r$ can be written in the form
\[
(1+aa^*)(1+b_0 b_0^*) = q_k (1+b_r^* b_r), \quad\text{and}
\]
\[
(1+b_i b_i^*) = q_i (1+b_{i-1}^* b_{i-1}) \quad (i=1,\dots,r).
\]
In the representation, the arrows $b_i$ and $b_i^*$ are all represented by $h\times h$ matrices, while $a$ is an $h\times 1$ matrix and $a^*$ is a $1\times h$ matrix. Also, the condition $q^\delta=1$ gives $q_k q_1 \cdots q_r = 1$. 

Consider the (additive) deformed preprojective algebra $\Pi^\lambda(Q'')$, where $\lambda$ is given by
\[
\lambda_j = 0, \quad \lambda_k = q_k-1, \quad \lambda_i = \frac{q_i-1}{q_1\cdots q_i} \quad (i=1,\ldots,r).
\]
Clearly $\lambda\cdot\delta=0$. Defining
\[
a^\circ = a^*(1+b_0 b_0^*), \quad
b_i^\circ = b_0^*, \quad
b_i^\circ = \frac{1}{q_1 q_2\cdots q_i} b_i^* \quad (i=1,\ldots,r)
\]
we obtain
\[
b_i b_i^\circ - b_i^\circ b_i = \lambda_i 1 \quad (i=1,\ldots,r)
\]
and
\[
b_0 b_0^\circ - b_r^\circ b_r + a a^\circ = \lambda_k 1.
\]
Summing the traces of all these, and using that $\lambda\cdot\delta=0$, we see that $a a^\circ$ has trace zero. Thus $a^\circ a$ has trace zero, and since it is a $1\times 1$ matrix, it is zero. Thus the matrices $a,b_1,\ldots,b_r$, together with $a^\circ,b_1^\circ,\ldots,b_r^\circ$ give a representation of $\Pi^\lambda(Q'')$. Moreover it is clearly simple, since any subrepresentation, by the reverse process, gives rise to a submodule of the $\Lambda^q(Q)$-module. This is however impossible by \cite[Theorem 9.1]{CBmm} or \cite[Theorem 1.1]{CBHu}.
\end{proof}

As mentioned in the introduction, the necessity part of Theorem~\ref{t:dsp} now follows from Lemma~\ref{l:dspmulcorr} and Theorem~\ref{t:mainthmformultpreproj}. 

\frenchspacing


\begin{thebibliography}{99}

\bibitem{CBmm}
W. Crawley-Boevey,
Geometry of the moment map for representations of quivers,
Compositio Math.\ 126 (2001), 257--293.

\bibitem{CBipb}
W. Crawley-Boevey,
Indecomposable parabolic bundles and the existence of matrices in
prescribed conjugacy class closures with product equal to the identity,
Publ.\ Math.\ Inst.\ Hautes Etudes Sci.\ 100 (2004), 171--207.

\bibitem{CBicm}
W. Crawley-Boevey,
Quiver algebras, weighted projective lines, and the Deligne--Simpson problem,
International Congress of Mathematicians. Vol. II, 
117--129, Eur. Math. Soc., Z\"{u}rich, 2006. 

\bibitem{CBwtkac}
W. Crawley-Boevey,
Kac's Theorem for weighted projective lines,
J. Eur. Math. Soc. (JEMS) 12 (2010), 1331--1345. 

\bibitem{CBcw}
W. Crawley-Boevey,
Connections for weighted projective lines,
J. Pure Appl. Algebra, 215 (2011), 35--43. 

\bibitem{CBHu}
W. Crawley-Boevey and A. Hubery, 
A new approach to simple modules for preprojective algebras, 
Algebras and Representation Theory 23 (2020), 1849-1860.

\bibitem{CBSh}
W. Crawley-Boevey and P. Shaw,
Multiplicative preprojective algebras, middle convolution and the Deligne--Simpson problem,
Adv. Math. 201 (2006), 180--208. 

\bibitem{GL}
W. Geigle and H. Lenzing,
A class of weighted projective curves arising in representation
theory of finite-dimensional algebras,
in:
Singularities, representation of algebras, and vector bundles
(Proc.\ Lambrecht, 1985),
G.-M. Greuel and G. Trautmann (eds.),
265--297, Lecture Notes in Math., 1273,
Springer, Berlin, 1987.

\bibitem{GLperp}
W. Geigle and H. Lenzing,
Perpendicular categories with applications to representations and sheaves,
J. Algebra 144 (1991), 273--343.

\bibitem{Hh}
A. Hubery, 
Connections on sheaves on weighted projective lines, and an application to the Deligne--Simpson Problem, 
Habilitationshrift, Fakult\"at f\"ur Mathematik, Universit\"at Bielefeld, March 2023.

\bibitem{HL}
T. H\"ubner and H. Lenzing, 
Categories perpendicular to exceptional bundles, 
manuscript 1993.

\bibitem{Kacm}
V. G. Kac,
Root systems, representations of quivers and invariant theory,
in: Invariant theory
(Proc.\ Montecatini, 1982),
F. Gherardelli (ed.),
74--108, Lecture Notes in Math., 996,
Springer, Berlin, 1983.
%
\bibitem{Kostovsurvey}
V. P. Kostov, 
The Deligne--Simpson problem --- a survey,
\textit{J. Algebra} \textbf{281} (2004), 83--108.

\bibitem{L}
H. Lenzing,
Representations of finite-dimensional algebras and singularity theory,
in: Trends in ring theory (Miskolc, 1996), 71--97,
Canadian Math.\ Soc.\ Conf.\ Proc., 22, Amer.\ Math.\ Soc., Providence, RI, 1998.

\bibitem{LM}
H. Lenzing and H. Meltzer, 
Sheaves on a weighted projective line of genus one, and representations of a tubular algebra, 
in: Representations of algebras (Ottawa, ON, 1992), 313--337, 
CMS Conf. Proc., 14, Amer. Math. Soc., Providence, RI, 1993.

\bibitem{Mihai}
A. Mihai,
Sur les connexions m\'eromorphes,
Rev.\ Roum.\ Math.\ Pures et Appl., \textbf{23} (1978), 215--232.

\bibitem{ST}
T. Schedler and A. Tirelli, 
Symplectic resolutions for multiplicative quiver varieties and character varieties for punctured surfaces,
in: Representation theory and algebraic geometry---a conference celebrating the birthdays of Sasha Beilinson and Victor Ginzburg, 393--459, Trends Math.\ Birkhäuser/Springer, Cham, 2022.

\bibitem{Schiffmann}
O. Schiffmann,
Noncommutative projective curves and quantum loop algebras,
Duke Math.\ J. 121 (2004), 113--168.

\bibitem{Shu}
C. Shu,
The tame Deligne--Simpson problem,
arxiv:2509.11841 [math.RT].
\end{thebibliography}
\end{document}